\documentclass[12pt]{article}

\usepackage{amsmath,amsfonts,amsthm}
\usepackage{amssymb}
\usepackage{euscript}
\usepackage{color}

\textwidth=16cm
\textheight=22cm
\hoffset=-15mm
\voffset=-15mm

\newtheorem{proposition}{Proposition}
\newtheorem{theorem}{Theorem}
\newtheorem{corollary}{Corollary}
\newtheorem{lemma}{Lemma}
\theoremstyle{definition}

\theoremstyle{definition}

\newtheorem*{Nakano}{Nakano Carrier Theorem}

\newcommand{\opoly}[2]{\EuScript{P\!}_o(^{#1}{#2})}

\DeclareMathOperator{\supp}{supp}

\newcommand{\R}{\ensuremath{\mathbb{R}}}

\newcommand{\N}{\ensuremath{\mathbb{N}}}

\newcommand{\MK}{\EuScript{M}(K)}

\title{A Nakano Carrier Theorem for Polynomials}

\author{Christopher Boyd, Raymond A. Ryan and Nina Snigireva}

\date{}

\makeatletter
        \def\@thefnmark{\null}
        \def\footnotetexta{\@footnotetext}

\begin{document}
\baselineskip=.65cm 
\maketitle

\begin{abstract}\footnotetexta{{\bf Keywords:} Banach lattice; Homogeneous polynomial; Orthogonally additive; Order continuity; Carrier.}
\footnotetexta{{\bf MSC(2020):} 46B42; 46G25.}
We use a localisation technique to study orthogonally additive polynomials
on Banach lattices. We derive alternative characterisations for orthogonal
additivity of polynomials and orthosymmetry of $m$-linear mappings. We
prove that
an orthogonally additive polynomial which is order continuous at one point is
order continuous at every point and we give an example to show that this result
does not extend to regular polynomials in general. Finally, we prove a Nakano
Carrier theorem for orthogonally additive polynomials, generalising a result
of Kusraev.
\end{abstract}
 
\section{Introduction}

The purpose of this paper is to prove a carrier theorem for homogeneous polynomials on Banach lattices that generalises the classical Nakano Carrier Theorem for linear functionals. 

Let us first recall the definitions of the null ideal and the carrier of a linear functional. Let $E$ be a Banach lattice and let $\varphi\in E'$.  
The null ideal (or absolute kernel) of $\varphi$ is
$$
N_\varphi = \bigl\{x\in E : |\varphi|(|x|)=0 \bigr\}\,.
$$
The carrier of $\varphi$ is the disjoint complement of its
null ideal:
$$
C_\varphi = N_\varphi^\perp = \bigl\{x\in E: x\perp y 
\text{ for every } y\in N_\varphi\bigr\}\,.
$$

Let us now state the Nakano Carrier Theorem, see for example \cite{ABur, Zaa}. We will give the statement of this theorem for Banach lattices, though it is valid in a more general setting. 
\begin{Nakano}
	Let $\varphi$ and $\psi$ be order bounded linear functionals on the Banach lattice $E$,
	one of which is order continuous. 
	 Then $\varphi \perp \psi$ if and only if $C_{\varphi} \perp C_{\psi}$.
\end{Nakano}

A natural question is how can we extend this to $m$-homogeneous polynomials on Banach lattices. Each $m$-homogeneous polynomial has an associated symmetric $m$-linear mapping on $E^m$. This complicates the definition of a carrier for an $m$-homogeneous polynomial as it is not clear if it should be a subset of $E$ or $E^m$. On the other hand, every $m$-homogeneous polynomial can be considered as a linear functional on a suitable tensor product. From this point of view, the carrier could be defined as a subset of a tensor product. However, the natural tensor product, which is the Fremlin tensor product, does not behave well with respect to order continuity. Namely, there is no perfect correspondence between order continuity of a multilinear map and the associated linear functional on the tensor product \cite{FremlinT}.

In this paper, we will see that if we restrict our attention to the class of orthogonally additive polynomials then the carrier is a natural, well behaved object. Let us note that the orthogonally additive polynomials correspond to orthosymmetric multilinear maps and that  Kusraev \cite{Kusr} has proved a carrier theorem for a special class of orthosymmetric multilinear maps. Here we take a different approach. In particular, our approach involves localisation to principal ideals. This allows us to consider orthogonally additive polynomials on $C(K)$ which in turn allows us to view these polynomials as measures.

As in the linear case, order continuity of polynomials will play an important role in the proof of the nonlinear version of Nakano's theorem. So we will discuss order continuous polynomials in detail. In particular, we will show that for orthogonally additive polynomials order continuity at the origin implies order continuity at every point. We will also, construct a counterexample to show that this is not the case for regular polynomials.  

For further information on polynomials we refer the reader to \cite{Dineen} and for the theory of Banach lattices to \cite{Aliprantis, Meyer-Nieberg}.

\section{Orthogonally additive polynomials and their properties}

Let $E$ be a Banach lattice. Recall that a regular $m$-linear
form $A\colon E^m \to \R$ is said to be
{orthosymmetric} if 
$A(x_1,\dots,x_m)=0$ whenever some pair $x_i$, $x_j$ 
of the arguments are disjoint, i.e. $|x_i|\wedge |x_j| = 0$. Note that it suffices to check this
condition for positive arguments, 
since $|x_i|\perp |x_j|$ if and only if
$x_i^\pm$ are  disjoint
from  $x_j^\pm$. Bu and Buskes \cite{BuBuskes12}
showed that every orthosymmetric $m$-linear form
is symmetric. Recall as well that a regular $m$-homogeneous
polynomial $P= \widehat{A}$ is said to be
orthogonally additive if $P(x+y)=P(x)+ P(y)$
whenever $x$ and $y$ are disjoint. Bu and Buskes \cite{BuBuskes12} showed that this is equivalent to the associated symmetric $m$-linear mapping $A$ being orthosymmetric.

Orthosymmetric $m$-linear mappings on $C(K)^m$ and orthogonally additive $m$-homogeneous polynomials on $C(K)$ have very useful integral representations which we will use throughout this paper. If we work in a principal ideal $E_a$ generated by a positive element $a$, then we can identify $E_a$ with the Banach lattice $C(K_a)$ for some compact Hausdorff space $K_a$ \cite{Kakut} and make use of these integral representations.

\begin{theorem}[\cite{Carando2006, PV}]\label{integral}
Let  $P$ be an orthogonally additive $m$-homogeneous polynomial on $C(K)$ and $A$ be the associated orthosymmetric $m$-linear mapping on $C(K)^m$. Then
$$
P(x)=\int_{K} x(t)^m\,d\mu(t)
$$
and therefore
$$
A(x_1,\dots,x_m)=\int_{K}x_1(t)\cdots x_m(t)\, d\mu(t),
$$
for some regular Borel measure $\mu$ on $K$.

\end{theorem}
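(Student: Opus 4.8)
The plan is to show that the orthosymmetric $m$-linear form $A$ associated with $P$ \emph{factors through pointwise multiplication} $C(K)^m\to C(K)$, in the precise sense that
$$
A(f_1,\dots,f_m)=A(f_1\cdots f_m,1,\dots,1)\qquad\text{for all }f_1,\dots,f_m\in C(K).
$$
Granting this, $\Lambda(h):=A(h,1,\dots,1)$ is a bounded linear functional on $C(K)$, so the Riesz representation theorem produces a regular Borel measure $\mu$ with $\Lambda(h)=\int_K h\,d\mu$; hence $A(f_1,\dots,f_m)=\int_K f_1\cdots f_m\,d\mu$, and restricting to the diagonal gives $P(x)=A(x,\dots,x)=\int_K x^m\,d\mu$. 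Conversely, once $P$ has this form the formula for $A$ is forced, since $A(x_1,\dots,x_m)$ and $\int_K x_1\cdots x_m\,d\mu$ are then symmetric $m$-linear forms with the same diagonal and hence equal by polarisation. Everything thus reduces to the factorisation identity.

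I would deduce the factorisation identity from a bilinear statement. Fix $f_3,\dots,f_m$ and set $B(f,g):=A(f,g,f_3,\dots,f_m)$; this is a bounded bilinear form on $C(K)$ that is symmetric (orthosymmetric forms are symmetric, as recalled above) and vanishes on disjoint pairs (by orthosymmetry of $A$). A telescoping argument---absorb $f_2$ into the first slot, then $f_3$, and so on---shows that the factorisation identity follows once we know: \emph{every bounded symmetric bilinear form $B$ on $C(K)$ vanishing on disjoint pairs satisfies $B(f,g)=B(fg,1)$.} Using $4fg=(f+g)^2-(f-g)^2$ together with $4B(f,g)=B(f+g,f+g)-B(f-g,f-g)$, this in turn follows once the quadratic form $q(f):=B(f,f)$ factors as $q(f)=\Lambda_0(f^2)$ for a bounded linear functional $\Lambda_0$ on $C(K)$ (the relation $B(h,1)=\Lambda_0(h)$ then drops out by the same polarisation). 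One half is easy: if $f^2=g^2$ then $(f-g)(f+g)=0$, so $f-g\perp f+g$ and symmetry gives $B(f,f)-B(g,g)=B(f-g,f+g)=0$; positive homogeneity and boundedness of the induced map $F\mapsto B(\sqrt F,\sqrt F)$ on $C(K)^+$ are also immediate.

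The main obstacle is the remaining half, \emph{additivity}: that $q(F+G)=q(F)+q(G)$ for $F,G\in C(K)^+$. Disjointness is genuinely unavailable inside $C(K)$, since $\sqrt F$ and $\sqrt G$ overlap whenever $F$ and $G$ are both somewhere positive, so orthosymmetry cannot be applied directly and the space has to be enlarged. My plan is to pass to the bidual $C(K)''=C(\widetilde K)$, a Dedekind complete $AM$-space with unit (so $\widetilde K$ is Stonean, the simple functions are norm-dense, and any finitely many of them can be refined to combinations of a single finite family of pairwise disjoint idempotents $\{e_i\}$). One extends $B$ to the appropriate Arens extension $\widetilde B$---still bounded, symmetric, and, crucially, still vanishing on disjoint pairs---and approximates $F$ and $G$ simultaneously in norm by simple functions $\sum_i a_ie_i$ and $\sum_i b_ie_i$; then $\sqrt{F+G},\sqrt F,\sqrt G$ are approximated by $\sum_i\sqrt{a_i+b_i}\,e_i$, $\sum_i\sqrt{a_i}\,e_i$, $\sum_i\sqrt{b_i}\,e_i$, and since $\widetilde B(e_i,e_j)=0$ for $i\ne j$ all cross terms vanish, giving $\widetilde B\bigl(\sum_i\sqrt{a_i+b_i}\,e_i,\sum_i\sqrt{a_i+b_i}\,e_i\bigr)=\sum_i(a_i+b_i)\widetilde B(e_i,e_i)$, which is the sum of the corresponding quantities for $F$ and $G$. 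Letting the approximation error go to zero (legitimate by norm-continuity of $\widetilde B$) yields $q(F+G)=q(F)+q(G)$. The point that will need real care is that orthogonality survives the passage to the Arens extension; equivalently, one can avoid the bidual by noting that the operator $f\mapsto B(f,\cdot)$ from $C(K)$ into $M(K)$ has the property $\supp B(f,\cdot)\subseteq\supp f$ and is therefore multiplication by the fixed measure $B(1,\cdot)$, which gives $B(f,g)=\int_K fg\,d\mu$ at once.
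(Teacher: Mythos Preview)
The paper does not actually prove this theorem: it cites the polynomial representation $P(x)=\int_K x^m\,d\mu$ from \cite{Carando2006,PV} and then supplies only the short polarisation computation showing that the multilinear formula $A(x_1,\dots,x_m)=\int_K x_1\cdots x_m\,d\mu$ follows from it. Your proposal runs in the opposite direction --- you establish the multilinear formula first via the factorisation $A(f_1,\dots,f_m)=A(f_1\cdots f_m,1,\dots,1)$ and Riesz representation, then specialise to the diagonal --- so you are attempting strictly more than the paper does, and by a genuinely different route.

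Your reduction to the bilinear identity $B(f,g)=B(fg,1)$ and then to the additivity of $F\mapsto B(\sqrt F,\sqrt F)$ on $C(K)^+$ is correct and well organised. The gap is in how you close that last step. The Arens-extension route is viable, but the sentence ``orthogonality survives the passage to the Arens extension'' hides a nontrivial fact (it is true, but it is essentially a theorem in its own right and you give no mechanism for it). Your alternative --- ``$\supp B(f,\cdot)\subseteq\supp f$, and therefore $f\mapsto B(f,\cdot)$ is multiplication by $B(1,\cdot)$'' --- is circular as stated: for a general bounded linear $T\colon C(K)\to M(K)$ the support condition alone does \emph{not} force $Tf=f\cdot T1$ without further argument, and in fact the equality $\langle Tf,g\rangle=\langle T1,fg\rangle$ is exactly the identity $B(f,g)=B(fg,1)$ you are trying to prove. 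One clean way to finish, in the spirit of \cite{Carando2006} and using machinery already quoted later in the paper, is to invoke Fremlin's representation of the regular bilinear form $B$ by a measure $\nu$ on $K\times K$: orthosymmetry forces $|\nu|(U\times V)=0$ whenever $\overline U\cap\overline V=\emptyset$, so $\nu$ is concentrated on the diagonal, and pushing forward to $K$ gives $B(f,g)=\int_K fg\,d\mu$ directly.
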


Let us note that the above theorem was proved in \cite{PV} and also, with a shorter proof, in \cite{Carando2006}, for orthogonally additive $m$-homogeneous polynomials. The corresponding result for orthosymmetric $m$-linear mappings follows easily from the integral formulae for polynomials. For completeness, we will now give a short proof of this fact. Using the polarisation formula, we have
\begin{align*}
A(x_1,\dots,x_m)&=\frac{2^m}{m!}\sum_{\varepsilon_i=\pm 1}\varepsilon_1\cdots\varepsilon_m P(\varepsilon_1x_1+\cdots+\varepsilon_mx_m)\\
&=\frac{2^m}{m!}\sum_{\varepsilon_i=\pm 1}\varepsilon_1\cdots\varepsilon_m\int_{K}(\varepsilon_1x_1+\cdots+\varepsilon_mx_m)^m\,d\mu\\
&=\int_{K}\frac{2^m}{m!}\sum_{\varepsilon_i=\pm 1}\varepsilon_1\cdots\varepsilon_m(\varepsilon_1x_1+\cdots+\varepsilon_mx_m)^m\,d\mu\\
&=\int_{K}x_1\cdots x_m\,d\mu.
\end{align*}

The above theorem allowed us to show that there is a Banach lattice isometric isomorphism from the space $\MK$ of regular Borel measures on a compact Hausdorff space $K$, equipped with the variation norm, to the space $\opoly{m}{C(K)}$ of orthogonally additive polynomials on $C(K)$, equipped with the regular norm \cite{BRS}.

\begin{theorem}[\cite{BRS}]\label{p:isomorphism}
	Let $K$ be a compact, Hausdorff space and let $m$ be a positive integer.
	Let $I_m \colon \MK  \to \opoly{m}{C(K)}$
	be given by
	$$ 
	(I_m\mu)(x) = \int_K x^m \,d\mu\,.
	$$
	Then 
	$I_m$ is a
	Banach lattice isometric
	isomorphism from $\MK$ onto $\opoly{m}{C(K)}$.
\end{theorem}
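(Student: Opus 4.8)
The plan is to read everything off from the integral representation in Theorem~\ref{integral}. First note that $I_m$ is clearly linear and that $I_m\mu$ really is an orthogonally additive $m$-homogeneous polynomial: if $x\perp y$ in $C(K)$ then at each $t\in K$ one of $x(t),y(t)$ is $0$, so $(x+y)^m=x^m+y^m$ pointwise and hence $(I_m\mu)(x+y)=(I_m\mu)(x)+(I_m\mu)(y)$. The estimate $|(I_m\mu)(x)|\le\int_K|x|^m\,d|\mu|\le\|x\|^m|\mu|(K)$ shows $I_m$ is bounded, and Theorem~\ref{integral} is exactly the statement that $I_m$ maps onto $\opoly{m}{C(K)}$. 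So the remaining work is to prove injectivity, bipositivity, and the isometry.

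For injectivity I would suppose $I_m\mu=0$ and substitute $x+t\cdot 1$ for $x$: then $t\mapsto\int_K(x+t)^m\,d\mu=\sum_{k=0}^m\binom{m}{k}t^k\int_K x^{m-k}\,d\mu$ vanishes for all $t\in\R$, so all its coefficients vanish; in particular $\int_K x\,d\mu=0$ for every $x\in C(K)$, whence $\mu=0$ by the Riesz representation theorem. (Alternatively, polarisation puts every product $x_1\cdots x_m$, and hence — taking all but one factor equal to $1$ — every element of $C(K)$, in the linear span of $\{x^m:x\in C(K)\}$, which gives the same conclusion.)

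The key step for the order structure is the observation that $t\mapsto t^{1/m}$ is continuous on $[0,\infty)$, so $\{x^m:0\le x\in C(K)\}$ is precisely the positive cone of $C(K)$. Ordering $\opoly{m}{C(K)}$ by $P\ge 0\iff P(x)\ge 0$ for all $x\ge 0$, one direction is immediate ($\mu\ge 0\Rightarrow I_m\mu\ge 0$), and the converse follows from the observation: if $I_m\mu\ge 0$ then $\int_K f\,d\mu\ge 0$ for every $0\le f\in C(K)$, i.e.\ $\mu\ge 0$. Thus $I_m$ is a bipositive linear bijection from the Banach lattice $\MK$ onto the ordered Banach space $\opoly{m}{C(K)}$ (which is complete, being the isometric image of a complete space); consequently $\opoly{m}{C(K)}$ is itself a vector lattice, $I_m$ is a lattice isomorphism, and $|I_m\mu|=I_m|\mu|$ for every $\mu$.

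Finally, using the lattice isomorphism,
\[
\|I_m\mu\|_r=\bigl\|\,|I_m\mu|\,\bigr\|=\|I_m|\mu|\,\|=\sup_{\|x\|\le 1}\Bigl|\int_K x^m\,d|\mu|\Bigr|=|\mu|(K)=\|\mu\|,
\]
the second-to-last equality holding because $|\mu|\ge 0$: for $\|x\|\le 1$ the supremand is at most $\int_K|x|^m\,d|\mu|\le|\mu|(K)$, and it equals $|\mu|(K)$ at the constant function $1$. This proof is a fairly mechanical consequence of Theorem~\ref{integral}; the only place calling for a little care is the bookkeeping around the order and the regular norm on $\opoly{m}{C(K)}$ — one must be explicit that its lattice structure is obtained by transporting that of $\MK$ across the bipositive bijection $I_m$, and that its regular norm is the norm of the modulus in this lattice — after which everything reduces to the continuity of $m$-th roots and to the Riesz representation theorem.
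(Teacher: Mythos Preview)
The paper does not actually prove this theorem: it is quoted from \cite{BRS} and used as a tool, so there is no in-paper argument to compare against. Your proof is self-contained and correct; the ideas --- surjectivity from Theorem~\ref{integral}, injectivity via substituting $x+t\cdot 1$ (or polarisation), bipositivity from the fact that $f\mapsto f^{1/m}$ maps $C(K)_+$ onto itself, and the isometry from evaluating $I_m|\mu|$ at the constant $1$ --- are exactly the natural ones.

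One point deserves a little more care than you give it. The Banach lattice structure on $\opoly{m}{C(K)}$ is not something you are free to \emph{define} by transport of structure: in this paper it is the structure inherited as a band in the lattice of regular $m$-homogeneous polynomials, where ``$P\ge 0$'' means the associated symmetric $m$-linear form $A$ is positive, and the regular norm is $\|P\|_r=\|\,|P|\,\|$ computed in that ambient lattice. So you should check that, for orthogonally additive $P$, positivity of $A$ is equivalent to your condition ``$P(x)\ge 0$ for all $x\ge 0$''. This is easy once you have the integral representation: if $P(x)=\int_K x^m\,d\mu\ge 0$ for all $x\ge 0$, then by the $m$-th-root trick $\int_K f\,d\mu\ge 0$ for every $f\ge 0$, hence $\mu\ge 0$, and then $A(x_1,\dots,x_m)=\int_K x_1\cdots x_m\,d\mu\ge 0$ for $x_i\ge 0$. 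With that verified, your bipositive bijection is automatically a lattice isomorphism onto the \emph{given} lattice $\opoly{m}{C(K)}$, and the isometry computation goes through as written. Your closing remark already flags this bookkeeping issue; I would just make the verification explicit rather than leave it as a caveat.
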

Orthosymmetry can be characterised
using functional equations with no
restriction (such as disjointness) on
the arguments.  We begin by generalising the result for the bilinear case which was proved by Kusraev \cite{Kusr1}.
\begin{proposition}[\cite{Kusr1}]\label{Kus}
	A regular bilinear form $A$ on $E\times E$ is orthosymmetric if and only if
	\begin{equation}\label{bil}
	A(x,y) = A(x\vee y,x\wedge y)
	\end{equation}
	for all $x,y\in E$.
\end{proposition}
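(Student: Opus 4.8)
The plan is to derive both implications from the basic lattice identity $x+y=(x\vee y)+(x\wedge y)$ together with the observation that the ``relative positive parts'' of $x$ and $y$ over $x\wedge y$ are disjoint. First I would record the elementary facts: for $x,y\in E$, set $w=x\wedge y$, $u=x-w$, $v=y-w$. Then $u,v\ge 0$, and by translation invariance of the lattice operations $u\wedge v=(x-w)\wedge(y-w)=(x\wedge y)-w=0$, while $u+v+w=x+y-w=x\vee y$. Hence $x=u+w$, $y=v+w$, $x\vee y=u+v+w$ and $x\wedge y=w$.

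For the direction ``orthosymmetric $\Rightarrow$ \eqref{bil}'', I would expand by bilinearity,
\[
A(x,y)=A(u+w,\,v+w)=A(u,v)+A(u,w)+A(w,v)+A(w,w),
\]
where $A(u,v)=0$ because $u\perp v$, and similarly
\[
A(x\vee y,\,x\wedge y)=A(u+v+w,\,w)=A(u,w)+A(v,w)+A(w,w).
\]
Comparing the two, identity \eqref{bil} reduces to $A(w,v)=A(v,w)$, which holds because every orthosymmetric bilinear form is symmetric (Bu--Buskes \cite{BuBuskes12}); note $v$ and $w$ need not be disjoint, so the symmetry input is genuinely used here.

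For the converse, assume \eqref{bil} holds for all $x,y\in E$. If $a,b\ge 0$ with $a\perp b$, then $a\vee b=a+b$ and $a\wedge b=0$, so $A(a,b)=A(a+b,0)=0$. For arbitrary disjoint $x,y$ (i.e.\ $|x|\wedge|y|=0$) each of $x^{+},x^{-}$ is disjoint from each of $y^{+},y^{-}$; expanding $A(x,y)$ into the four terms $A(x^{\pm},y^{\pm})$ and applying the positive case to each gives $A(x,y)=0$. Hence $A$ is orthosymmetric.

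The whole argument is essentially bookkeeping in the lattice arithmetic of $E$, and I do not anticipate a genuine obstacle; the only non-formal ingredient is the symmetry of orthosymmetric bilinear forms used in the first implication, which is the single place where an external result (or a short direct computation) is required.
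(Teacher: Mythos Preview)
Your argument is correct. The paper, however, does not supply its own proof of this bilinear statement: it is quoted from Kusraev \cite{Kusr1} and left unproved, serving as motivation for the $3$-linear computation that follows. The paper's actual contribution is the general $m$-linear identity (the proposition involving $J_1^m,\dots,J_m^m$), and that is proved by a completely different method: one localises to the principal ideal generated by $|x_1|\vee\cdots\vee|x_m|$, identifies it with a $C(K)$ space, invokes the integral representation of Theorem~\ref{integral}, and reduces the identity to the pointwise fact that $\prod x_i(t)=\prod J_i^m(x)(t)$ for real numbers.

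Your approach is the natural bare-hands one for $m=2$: decompose $x=u+w$, $y=v+w$ with $u\perp v$, expand both sides, and match terms. It is more elementary in that it stays entirely within the abstract lattice and uses only bilinearity plus the Bu--Buskes symmetry result, with no representation theory or measure theory. The localisation route, by contrast, avoids the explicit bookkeeping (which, as the paper's $3$-linear computation shows, becomes cumbersome quickly) and scales uniformly to all $m$; that scalability is precisely what the paper is after.
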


Now let $A$ be an orthosymmetric $3$-linear form.
Let $x,y,z\in E$.  By orthosymmetry with respect to the first two
variables, we have
$
A(x,y,z) = A(x\vee y,x\wedge y, z)$.
Now using orthosymmetry in the first and third 
variables,
\begin{align*}
A(x,y,z)&= A(x\vee y\vee z, x\wedge y, 
(x\vee y)\wedge z)\\
&= A(x\vee y\vee z, x\wedge y,
(x\wedge z)\vee(y\wedge z)\,,
\end{align*}
by the distributivity of the lattice
operations.
Finally, we use orthosymmetry with respect
to the second and third variables to get
$$
A(x,y,z)= 
A(x\vee y\vee z, (x\wedge y) \vee
(x\wedge z)\vee(y\wedge z),
(x\wedge y)\wedge (
(x\wedge z)\vee(y\wedge z)))\,.
$$
Now
\begin{align*}
(x\wedge y)\wedge (
(x\wedge z)\vee(y\wedge z))&=
((x\wedge y)\wedge (x\wedge z)) \vee
((x\wedge y)\wedge (y\wedge z))\\
&= x\wedge y\wedge z
\end{align*}
So we get the following identity that is 
satisfied by every orthosymmetric $3$-linear
form:
\begin{equation}\label{trilin}
A(x,y,z)= \\
A(x\vee y\vee z, (x\wedge y) \vee
(x\wedge z)\vee(y\wedge z),
x\wedge y \wedge z)
\end{equation}
for all $x,y,z\in E$. Conversely, if $A$
satisfies this, then clearly it is orthosymmetric.

One interesting consequence of this identity
is that if $A$ is an orthosymmetric $3$-linear
form, then in order to have $A(x,y,z)=0$,
it is not necessary for some pair of $x,y,z$
to be disjoint.  It suffices for them to be
``triply disjoint", in the sense that
$|x|\wedge |y| \wedge |z| = 0$.

We now consider the general case.  The pattern is clear, but we need some notation to avoid
very cumbersome formulae.  Let $ x=(x_1,\dots,
x_m)$ be an $m$-tuple in $E^m$.  
We define
\begin{align*}
J_1^m(x) &= \bigvee_{i=1}^{m}x_i\\
J_2^m(x)&= \bigvee_{\stackrel{1\le i_1,i_2\le m}{i_1<i_2}} \bigl(x_{i_1}\wedge x_{i_2}\bigr)\\
J_3^m(x)&= \bigvee_{\stackrel{1\le i_1,i_2,i_3\le m}{i_1<i_2<i_3}} \bigl(x_{i_1}\wedge x_{i_2}\wedge x_{i_3}\bigr)\\
\vdots &  \\
J_m^m(x) &= x_1\wedge \dots \wedge x_m
\end{align*}

Note that, if $x_1,\dots,x_m$
are real numbers, 
then $J_1^m(x), \dots, J_m^m(x)$ is the decreasing
rearrangement of $x_1,\dots,x_m$.
So we have
\begin{equation}\label{decrear}
\prod_{i=1}^m x_i = \prod_{i=1}^m J_i^m(x)\,.
\end{equation}
The next result shows that orthosymmetric
multilinear forms are characterised by a similar
identity. The $3$-linear case suggests that a natural way to prove this result would be to use induction. However, it turns out that an easier approach is to use a localisation technique.

\begin{proposition} Let $E$ be a Banach lattice.
	A regular $m$-linear form $A$ on 
	$E^m$ is orthosymmetric if and only if
	\begin{equation}
	\label{mlin}
	A(x_1,\dots,x_m) = 
	A(J_1^m(x),\dots, J_m^m(x))
	\end{equation}
	for every  $x= (x_1,\dots,x_m)
	\in E^m$. 
\end{proposition}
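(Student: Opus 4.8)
The plan is to prove the substantive ("only if") implication by localising to a principal ideal and then applying the integral representation of Theorem~\ref{integral}; the converse is immediate. Indeed, if $A$ satisfies (\ref{mlin}) and $x_1,\dots,x_m\ge 0$ are such that $x_i\wedge x_j=0$ for some $i<j$, then $J_m^m(x)=x_1\wedge\cdots\wedge x_m\le x_i\wedge x_j=0$, so the right-hand side of (\ref{mlin}) has a zero entry and hence vanishes; thus $A(x_1,\dots,x_m)=0$, and since orthosymmetry need only be tested on positive tuples, $A$ is orthosymmetric.

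For the forward direction, fix $x=(x_1,\dots,x_m)\in E^m$ and put $a=|x_1|\vee\cdots\vee|x_m|$. Every $x_i$, and every lattice expression $J_i^m(x)$ for $1\le i\le m$, lies in the principal ideal $E_a$, which, with its order-unit norm, is lattice isometric to $C(K)$ for some compact Hausdorff space $K$ by Kakutani's theorem. The restriction $\tilde A$ of $A$ to $E_a^m$ is again a regular $m$-linear form (restrictions of positive forms are positive), hence continuous, and it is still orthosymmetric because disjointness in the sublattice $E_a$ is computed exactly as in $E$. So $\widehat{\tilde A}$ is a continuous orthogonally additive $m$-homogeneous polynomial on $C(K)$, and Theorem~\ref{integral} provides a regular Borel measure $\mu$ on $K$ with $\tilde A(y_1,\dots,y_m)=\int_K y_1(t)\cdots y_m(t)\,d\mu(t)$ for all $y_1,\dots,y_m\in C(K)$.

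It then remains to evaluate the two sides of (\ref{mlin}). Since lattice operations in $C(K)$ are pointwise, for each $t\in K$ we have $J_i^m(x)(t)=J_i^m\bigl(x_1(t),\dots,x_m(t)\bigr)$, and applying the elementary identity (\ref{decrear}) to the real numbers $x_1(t),\dots,x_m(t)$ gives $\prod_{i=1}^m x_i(t)=\prod_{i=1}^m J_i^m(x)(t)$. Hence the two integrands agree pointwise on $K$, so integration against $\mu$ yields $\tilde A(x_1,\dots,x_m)=\tilde A(J_1^m(x),\dots,J_m^m(x))$; as all arguments lie in $E_a$, this is precisely (\ref{mlin}).

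The only points that need care are in the localisation step, namely verifying that passing to $E_a$ preserves regularity, continuity and orthosymmetry, so that Theorem~\ref{integral} genuinely applies to $\tilde A$. Once this reduction is in place the identity collapses to the triviality that a finite product of real numbers is unchanged under decreasing rearrangement, and the induction on $m$ suggested by the $3$-linear computation is avoided entirely.
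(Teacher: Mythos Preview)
Your proof is correct and follows essentially the same approach as the paper: both directions match, with the forward implication handled by localising to the principal ideal $E_a$ with $a=|x_1|\vee\cdots\vee|x_m|$, invoking the integral representation of Theorem~\ref{integral}, and reducing to the pointwise identity~(\ref{decrear}). Your treatment is in fact slightly more explicit than the paper's, both in spelling out why the converse holds (via $J_m^m(x)=0$ and multilinearity) and in noting that regularity and orthosymmetry survive restriction to $E_a$.
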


\begin{proof} Clearly if $A$ satisfies $A(x_1,\dots,x_m) = 
	A(J_1(x),\dots, J_m(x))$ then $A$ is orthosymmetric. To prove the converse we will use a localisation technique.  Let $a=|x_1|\vee\dots\vee|x_m|$. The required identity can now be easily established on $E_a$ since $E_a$ can be identified with $C(K_a)$ for a suitable compact set $K_a$.  Hence, using Theorem \ref{integral} and equation (\ref{decrear}), we have
	\begin{align*}
	A(x_1,\dots,x_m)& =\int_{K}x_1(t)\dots x_m(t)\, d\mu(t)&\\
	                &=\int_{K}J_1^m(x(t))\dots J_m^m(x(t))\, d\mu(t)&\\
	                &=\int_{K}J_1^m(x)(t)\dots J_m^m(x)(t)\, d\mu(t)&\\
	                &=A(J_1^m(x),\dots, J_m^m(x))&
	\end{align*}
\end{proof}

We now consider orthogonally additive polynomials. Schwanke \cite{Schwanke} showed that it suffices
to verify the orthogonal additivity property of a regular homogeneous polynomial for
positive arguments.

\begin{lemma}[\cite{Schwanke}]\label{pos}
	
	Let $P$ be an $m$-homogeneous polynomial on a  Riesz space $E$.  If $P$ is orthogonally
	additive on the positive cone
	of $E$, then $P$ is orthogonally additive.
\end{lemma}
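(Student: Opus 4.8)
The plan is to reduce the statement to a single two‑variable polynomial identity and then decompose $x$ and $y$ into positive and negative parts. Write $P=\widehat A$ for the associated symmetric $m$-linear form $A$. The first step is the claim: if $u,v\in E$ are positive and disjoint, then $P(su+tv)=s^mP(u)+t^mP(v)$ for \emph{all} real $s,t$. Indeed, for $s,t\ge 0$ the elements $su$ and $tv$ are again positive and disjoint, so orthogonal additivity on the positive cone together with $m$-homogeneity gives $P(su+tv)=P(su)+P(tv)=s^mP(u)+t^mP(v)$. Both sides are polynomial functions of $(s,t)$ of degree at most $m$ — expand $P(su+tv)$ using $A$ — and they agree on the quadrant $[0,\infty)^2$, which has non-empty interior in $\R^2$; hence they agree on all of $\R^2$. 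Taking $s=1$, $t=-1$ yields in particular $P(u-v)=P(u)+(-1)^mP(v)$.

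Next I would apply this to the given disjoint pair $x,y$. Since $|x|\wedge|y|=0$, the four elements $x^+,x^-,y^+,y^-$ are pairwise disjoint, so $u:=x^++y^+$ and $v:=x^-+y^-$ are positive and disjoint with $u-v=x+y$. The claim then gives $P(x+y)=P(u)+(-1)^mP(v)=P(x^++y^+)+(-1)^mP(x^-+y^-)$. Since $x^+$ and $y^+$ are disjoint and positive, and likewise $x^-$ and $y^-$, orthogonal additivity on the cone gives $P(x^++y^+)=P(x^+)+P(y^+)$ and $P(x^-+y^-)=P(x^-)+P(y^-)$. Applying the claim once more, now to the disjoint positive pairs $(x^+,x^-)$ and $(y^+,y^-)$, gives $P(x)=P(x^+)+(-1)^mP(x^-)$ and $P(y)=P(y^+)+(-1)^mP(y^-)$. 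Combining the last four identities gives $P(x+y)=P(x)+P(y)$, as required.

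I expect the only genuine point to be the passage from the positive quadrant to all of $\R^2$ in the claim, i.e.\ the fact that a polynomial function on $\R^2$ vanishing on a set with non-empty interior vanishes identically; the rest is bookkeeping with $x^\pm$, $y^\pm$ and the sign $(-1)^m$. Note that this argument is purely algebraic and needs neither a norm nor any continuity of $P$: it uses only that $P$ is an $m$-homogeneous polynomial (so that $s\mapsto P(su+w)$ is an ordinary polynomial) and that $P$ is orthogonally additive on the cone. This is also why the $C(K)$-localisation used elsewhere in the paper is not the natural tool here: the integral representation of Theorem~\ref{integral} presupposes orthogonal additivity of $P$, which is precisely what is being established.
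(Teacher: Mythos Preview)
Your argument is correct. The paper, however, does not supply its own proof of this lemma: it is stated with a citation to Schwanke and left unproved. So there is no ``paper's proof'' to compare against directly.

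That said, the key device you use --- showing that the mixed terms $A(u^{m-k},v^k)$ vanish by comparing two polynomial expressions in scalar parameters that agree on a set with non-empty interior --- is precisely the technique the paper deploys in the proof of the \emph{next} (unnumbered) lemma, which characterises orthogonal additivity by the condition $P(x)=P(x^+)+(-1)^mP(x^-)$. There the authors take positive disjoint $x,y$, set $z_t=x-ty$, and equate the two expansions of $P(z_t)$ as polynomials in the single real variable $t$; your two-variable version with $(s,t)$ is a mild generalisation of the same idea. In effect you have folded the content of Lemma~\ref{pos} and the subsequent lemma into one argument, deriving $P(x)=P(x^+)+(-1)^mP(x^-)$ as a special case of your claim rather than taking it as the hypothesis. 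Your closing remark is also on point: the $C(K)$-localisation route is unavailable here because the integral representation of Theorem~\ref{integral} presupposes the very orthogonal additivity one is trying to establish.
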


Let us now show that in order for a homogeneous polynomial to be
orthogonally additive, it suffices for it to be orthogonally additive on the positive and negative parts of $x$ in $E$.

\begin{lemma}
	An  $m$-homogeneous polynomial $P$
	on a Riesz space $E$ is orthogonally
	additive if and only if
	\begin{equation}
	P(x) = P(x^+) + (-1)^m P(x^-)
	\end{equation}
	for every $x\in E$.
\end{lemma}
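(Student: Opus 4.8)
The plan is to treat the two implications separately; the forward one is a one-line deduction and the converse rests on a single one-variable polynomial substitution together with Lemma~\ref{pos}.

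For the direction ``orthogonally additive $\Rightarrow$ the identity'', I would simply observe that, for any $x\in E$, the elements $x^+$ and $-x^-$ are disjoint (since $|x^+|\wedge|-x^-|=x^+\wedge x^-=0$) and that $x=x^++(-x^-)$. Orthogonal additivity then gives $P(x)=P(x^+)+P(-x^-)$, and $m$-homogeneity gives $P(-x^-)=(-1)^mP(x^-)$, which is the asserted identity.

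For the converse, assume $P(x)=P(x^+)+(-1)^mP(x^-)$ for all $x\in E$. By Lemma~\ref{pos} it suffices to prove that $P$ is orthogonally additive on the positive cone, so fix positive $x,y\in E$ with $x\wedge y=0$; the goal is $P(x+y)=P(x)+P(y)$. Writing $P=\widehat{A}$ for the associated symmetric $m$-linear form $A$, the function $\varphi(s):=P(x+sy)$ is a polynomial of degree at most $m$ in the real variable $s$ (expand $A(x+sy,\dots,x+sy)$ by multilinearity and symmetry). For $s\le 0$ we have $x+sy=x-(-s)y$, a difference of the two positive, mutually disjoint elements $x$ and $(-s)y$, so $(x+sy)^+=x$ and $(x+sy)^-=(-s)y$; applying the hypothesis to $x+sy$ and using $m$-homogeneity,
\[
\varphi(s)=P(x)+(-1)^mP\bigl((-s)y\bigr)=P(x)+(-1)^m(-s)^mP(y)=P(x)+s^mP(y).
\]
Thus the polynomial $\varphi$ agrees with the polynomial $s\mapsto P(x)+s^mP(y)$ on the whole half-line $(-\infty,0]$, hence everywhere (a polynomial function vanishing on an infinite set is zero; if $P$ is vector-valued, compose with a functional). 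Evaluating at $s=1$ gives $P(x+y)=\varphi(1)=P(x)+P(y)$, as required.

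I do not anticipate a serious obstacle: the proof is short once the right substitution is in place. The only points that need care are (i) the reduction to positive, disjoint arguments via Lemma~\ref{pos}, which is precisely what lets us read off the positive and negative parts of $x+sy$ cleanly (this would be awkward for arbitrary disjoint elements), and (ii) the sign bookkeeping with $(-1)^m$ and $(-s)^m$, which is exactly what forces the two polynomials in $s$ to coincide.
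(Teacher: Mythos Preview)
Your proof is correct and essentially the same as the paper's: both consider, for disjoint positive $x,y$, the one-variable polynomial $s\mapsto P(x+sy)$ (the paper writes $z_t=x-ty$ with $t>0$, which is your $s=-t$), use the hypothesis to identify it on a half-line with $P(x)+s^mP(y)$, and then extend to $s=1$. The only cosmetic difference is that the paper compares coefficients to extract $A(x^{m-k},y^k)=0$ for $0<k<m$ and then re-expands $P(x+y)$, whereas you invoke the polynomial identity principle directly and evaluate at $s=1$; both finish by appealing (implicitly in the paper's case) to Lemma~\ref{pos}.
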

\begin{proof}
	The condition is clearly necessary.
	Conversely, suppose that $P$ satisfies
	this condition.  Let $x,y$ be disjoint
	positive elements of $E$.  Taking 
	$z_t=x-ty$ for $t>0$, we have $z_t^+=x$ and $z_t^-=ty$ and so 
	$P(z_t)=P(x-ty)= P(x)+(-1)^m t^mP(y)$. We also have that 
	$$
	P(z_t)=P(x-ty)=\sum_{k=0}^{m}\binom{m}{k}(-1)^kt^kA(x^{m-k}, y^k).
	$$
	These two identities for $P(z_t)$ imply that $A(x^{m-k}, y^k)=0$ for $0<k<m$. Therefore
	$$
	P(x+y)=\sum_{k=0}^{m}\binom{m}{k}A(x^{m-k}, y^k)=P(x)+P(y)
	$$ 
	and hence $P$ is orthogonally additive on $E_+$.
\end{proof}

We recall that a Riesz space is uniformly complete if every principal ideal is complete in the unit norm and that every Banach
lattice is uniformly complete. Buskes and Schwanke \cite{BuskSchw}, improving on a result
of Kusraeva, showed that a regular $m$-homogeneous
polynomial $P=\widehat{A}$ on a uniformly complete
Riesz space $E$ is orthogonally additive if and only 
if 
$$
P\bigl((x_1^m+\dots+x_k^m)^{1/m}\bigr)=
P(x_1)+\dots + P(x_k)
$$
for every $k\in \N$, $x_1,\dots,x_k\in E_+$,
and in addition
$$
P\bigl((x_1\dots x_m)^{1/m}\bigr)
= A(x_1,\dots,x_m)
$$
for all $x_1,\dots,x_m\in E_+$.
The expressions that appear on the left
hand side of these formulae are 
defined by the Krivine functional calculus. Recently, Schwanke \cite{Schwanke} showed that 
just one of these conditions suffices.
\begin{proposition}[\cite{Schwanke}]
	Let $P= \widehat{A}$ be a regular $m$-homogeneous polynomial
	on a uniformly complete Riesz space $E$.
	The following are equivalent:
	\begin{enumerate}
		\item[(a)]
		$P$ is orthogonally additive.
		
		\medskip
		\item[(b)]
		$
		P\bigl((x_1^m+\dots+x_k^m)^{1/m}\bigr)=
		P(x_1)+\dots + P(x_k)
		$\\
		for every $k\in \N$, $x_1,\dots,x_k\in E_+$.
		
		\medskip
		\item[(c)]
		$
		P\bigl((x^m+y^m)^{1/m}\bigr)
		= P(x)+ P(y)
		$\\
		for every $x,y\in E_+$.
		
		\medskip
		\item[(d)]
		$
		P\bigl((x_1\dots x_m)^{1/m}\bigr)
		= A(x_1,\dots,x_m)
		$\\
		for all $x_1,\dots,x_m\in E_+$.
		\end{enumerate}
	
\end{proposition}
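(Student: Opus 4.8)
The plan is to prove $(a)\Rightarrow(b)\Rightarrow(c)\Rightarrow(a)$ together with $(a)\Leftrightarrow(d)$, all by the localisation technique of this section. The implication $(b)\Rightarrow(c)$ is merely the case $k=2$. The implications $(a)\Rightarrow(b)$ and $(a)\Rightarrow(d)$ are already contained in the Buskes--Schwanke result quoted above, but I would re-derive them below so that the argument is self-contained; the genuinely new content is $(c)\Rightarrow(a)$ and $(d)\Rightarrow(a)$.

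The tool used throughout is the \emph{locality} of the Krivine functional calculus. Given finitely many elements $u_1,\dots,u_k\in E_+$, put $a=u_1\vee\dots\vee u_k$; then every Krivine expression in $u_1,\dots,u_k$ lies in the principal ideal $E_a$ (for instance $(u_1^m+\dots+u_k^m)^{1/m}\le k^{1/m}(u_1\vee\dots\vee u_k)$, and a geometric mean $(u_1\cdots u_m)^{1/m}$ is dominated by $u_1\vee\dots\vee u_m$), and under the Kakutani identification $E_a\cong C(K_a)$ \cite{Kakut} it is given by the corresponding pointwise operation on continuous functions. Two special cases will be used: if $x,y\in E_+$ are disjoint then, taking $a=x\vee y$, at each point of $K_a$ one of $x,y$ vanishes, so $(x^m+y^m)^{1/m}=x+y$ in $E_a$; and if $x_1,\dots,x_m\in E_+$ satisfy $x_i\perp x_j$ for some $i\ne j$, then $x_1\cdots x_m$ is identically zero on $K_a$, so $(x_1\cdots x_m)^{1/m}=0$ in $E_a$, hence in $E$.

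Granting this, $(c)\Rightarrow(a)$ runs as follows: apply (c) to disjoint $x,y\in E_+$; since $(x^m+y^m)^{1/m}=x+y$, we get $P(x+y)=P(x)+P(y)$, so $P$ is orthogonally additive on the positive cone, and Lemma \ref{pos} gives (a). For $(d)\Rightarrow(a)$: if $x_1,\dots,x_m\in E_+$ with $x_i\perp x_j$ for some $i\ne j$, then (d) gives $A(x_1,\dots,x_m)=P\bigl((x_1\cdots x_m)^{1/m}\bigr)=P(0)=0$, so $A$ is orthosymmetric on positive arguments and therefore orthosymmetric, whence $P=\widehat A$ is orthogonally additive by Bu--Buskes. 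Finally, $(a)\Rightarrow(b)$ and $(a)\Rightarrow(d)$: given the positive arguments, localise to $E_a$ with $a$ their join; the restriction $P|_{E_a}=\widehat{A|_{E_a^m}}$ is a regular, hence continuous, orthogonally additive $m$-homogeneous polynomial on $C(K_a)$, so by Theorem \ref{integral} there is a regular Borel measure $\mu$ on $K_a$ with $P|_{E_a}(f)=\int_{K_a}f^m\,d\mu$ and $A|_{E_a^m}(f_1,\dots,f_m)=\int_{K_a}f_1\cdots f_m\,d\mu$; evaluating these at the relevant Krivine expressions yields $P\bigl((x_1^m+\dots+x_k^m)^{1/m}\bigr)=\int_{K_a}\sum_i x_i^m\,d\mu=\sum_iP(x_i)$ and $P\bigl((x_1\cdots x_m)^{1/m}\bigr)=\int_{K_a}x_1\cdots x_m\,d\mu=A(x_1,\dots,x_m)$.

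The only real subtlety is to keep $(c)\Rightarrow(a)$ and $(d)\Rightarrow(a)$ free of circularity: there one must evaluate the Krivine expressions using only the lattice isomorphism $E_a\cong C(K_a)$, and \emph{not} invoke the integral representation of $P$, which is available only after orthogonal additivity is known. The remaining points --- that $P|_{E_a}$ is a regular polynomial on $C(K_a)$ and that its associated symmetric $m$-linear form is $A|_{E_a^m}$, so that the measure furnished by Theorem \ref{integral} genuinely represents $P|_{E_a}$ and $A$ --- are routine.
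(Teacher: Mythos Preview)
The paper does not supply its own proof of this proposition; it is quoted with a citation to \cite{Schwanke} and left unproved. So there is no in-paper argument to compare against.

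That said, your argument is sound and very much in the spirit of the paper's localisation philosophy. The crucial implications $(c)\Rightarrow(a)$ and $(d)\Rightarrow(a)$ are handled cleanly: you correctly evaluate the Krivine expressions $(x^m+y^m)^{1/m}=x+y$ for disjoint $x,y\in E_+$ and $(x_1\cdots x_m)^{1/m}=0$ when some pair of the $x_i$ are disjoint by passing to $E_a\cong C(K_a)$ and reading off the pointwise identities, and you are careful not to invoke Theorem~\ref{integral} before orthogonal additivity is established. The forward implications $(a)\Rightarrow(b)$ and $(a)\Rightarrow(d)$ via the integral representation on $E_a$ are exactly the kind of computation the paper performs elsewhere (for instance in the proof of the valuation characterisation immediately below). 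One small point worth making explicit: the proposition is stated for uniformly complete Riesz spaces rather than Banach lattices, so when you write $E_a\cong C(K_a)$ you are using that, by the very definition of uniform completeness, each principal ideal $E_a$ is complete in the $a$-norm and hence an AM-space with unit; this is legitimate, but the reader may appreciate the reminder.
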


A real valued function $f$ on a vector lattice $E$ is 
said to be a valuation if $f(x\vee y) + f(x\wedge y) = f(x)+f(y)$
for all $x,y\in E$.  Buskes and Roberts \cite{BusRobe} showed that
an $m$-homogeneous polynomial on a Riesz space is orthogonally
additive if and only if it is a valuation.
We give an alternative proof that demonstrates the power of the localisation technique.

\begin{proposition}[\cite{BusRobe}]
	A regular $m$-homogeneous polynomial
	$P$ on a Banach lattice $E$ is 
	orthogonally additive if and only if
	\begin{equation}\label{poly2}
	P(x\vee y) + P(x\wedge y)
	= P(x)+ P(y)
	\end{equation}
	for all $x,y\in E_+$.
\end{proposition}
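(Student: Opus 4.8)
The plan is to prove the non-trivial implication (orthogonal additivity $\Rightarrow$ \eqref{poly2}) by the localisation method used throughout the paper, and to dispose of the converse directly.

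For the converse, suppose $P$ satisfies \eqref{poly2} for all $x,y\in E_+$. Given disjoint $x,y\in E_+$ we have $x\wedge y=0$, and since $x\vee y+x\wedge y=x+y$ in any Riesz space, $x\vee y=x+y$. Substituting $x,y$ into \eqref{poly2} and using $P(0)=0$ (valid since $m\ge 1$) gives $P(x+y)=P(x)+P(y)$, so $P$ is orthogonally additive on $E_+$; Lemma \ref{pos} then yields orthogonal additivity on all of $E$. This step is essentially immediate and needs no localisation.

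For the forward direction, assume $P$ is orthogonally additive and fix $x,y\in E_+$. Put $a=x\vee y$. Each of $x$, $y$, $x\vee y$, $x\wedge y$ is dominated by $a$, hence lies in the principal ideal $E_a$, which, being an AM-space with order unit $a$, is lattice isometric to $C(K_a)$ for some compact Hausdorff space $K_a$ by Kakutani's theorem. The restriction $P|_{E_a}$ is an $m$-homogeneous polynomial on $C(K_a)$: it is continuous because the inclusion $E_a\hookrightarrow E$ is continuous (from $|z|\le\|z\|_a\,a$ we get $\|z\|\le\|z\|_a\|a\|$), and it remains orthogonally additive since disjointness is inherited by the ideal. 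Theorem \ref{integral} therefore supplies a regular Borel measure $\mu$ on $K_a$ with $P(z)=\int_{K_a}z(t)^m\,d\mu(t)$ for all $z\in E_a$, where we identify elements of $E_a$ with their images in $C(K_a)$. Under this identification $x,y\ge 0$ and the lattice operations are taken pointwise, so $(x\vee y)(t)=\max\{x(t),y(t)\}$ and $(x\wedge y)(t)=\min\{x(t),y(t)\}$; since $\{\max\{s,t\},\min\{s,t\}\}=\{s,t\}$ as a multiset for all reals $s,t$, we obtain
$$
(x\vee y)(t)^m+(x\wedge y)(t)^m = x(t)^m + y(t)^m \qquad (t\in K_a).
$$
Integrating this identity against $\mu$ gives exactly \eqref{poly2}.

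The only point requiring care is the legitimacy of transporting the problem to $C(K_a)$, i.e.\ checking that $P|_{E_a}$ is a genuine continuous $m$-homogeneous polynomial so that Theorem \ref{integral} applies; this is routine, handled by the continuity of the ideal inclusion, and presents no real obstacle. Once this is in place, the whole content of the proposition reduces to the elementary scalar fact that the map $(s,t)\mapsto(\max\{s,t\},\min\{s,t\})$ merely permutes $\{s,t\}$, which is precisely what the localisation technique is designed to exploit.
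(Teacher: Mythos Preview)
Your argument is correct and follows the same route as the paper: the converse is immediate (the paper just says ``clearly sufficient''; you spell out the use of Lemma~\ref{pos}), and the forward direction is done by localising to $E_a$ with $a=x\vee y$, invoking the integral representation of Theorem~\ref{integral}, and reducing to the scalar identity $\max(s,t)^m+\min(s,t)^m=s^m+t^m$. Your extra care in verifying that $P|_{E_a}$ is a continuous polynomial on $C(K_a)$ is a detail the paper omits but is entirely in keeping with its approach.
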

\begin{proof}
	This condition is clearly sufficient.
	Conversely, suppose $P$ is orthogonally
	additive and let $x,y\in E_+$.
	Restricting to the principal ideal generated by $a=x\vee y$,
	we have an integral representation
	for $P$ on $E_a\cong C(K)$ given by a regular Borel
	measure $\mu$.  Thus
	\begin{align*}
	P(x\vee y)+ P(x\wedge y)
	&=\int_K (x\vee y)(t)^m+ (x\wedge y)(t)^m
	\,d\mu\\
	&=\int_K (x(t)\vee y(t))^m+ 
	(x(t)\wedge y(t))^m
	\,d\mu\\
	&= \int_K x(t)^m + y(t)^m\,d\mu
	=P(x)+ P(y)
	\end{align*}
\end{proof}

We note that this result has an obvious
generalisation to an identity
relating to any $k$-tuple
$(x_1,\dots,x_k)$ of positive elements, namely,
\begin{equation}
\sum_{i=1}^{k} P(J_ix) = \sum_{i=1}^{k}P(x_i)\,.
\end{equation}

Finally, let us note that the orthogonally additive $m$-homogeneous polynomials form a band in the Banach lattice of regular $m$-homogeneous polynomials \cite{BusRobe}. In particular, a regular $m$-homogeneous polynomial $P$ is orthogonally
additive if and only if $|P|$ is orthogonally additive.

\section{Localisation of multilinear mappings and polynomials to principal ideals}

We now develop further the technique of localisation used in the previous section. We will consider which properties of multilinear mappings and polynomials are preserved by localisation to principal ideals. In particular, we are interested whether order continuity of multilinear maps and polynomials is preserved by localisation. 

Let $E_1,\dots,E_m,F$ be Banach lattices. Recall that  $A\colon E_1\times E_2\times \dots \times E_m\to F$ is separately order continuous if it is order continuous in each variable when the other $m-1$ variables are kept fixed. Recall also that $A\colon E_1\times E_2\times \dots \times E_m\to F$ is jointly order continuous if  $$A(x^1_{\alpha_1}, \dots, x^m_{\alpha_m} )\stackrel{o}\to A(x_1, \dots, x_m) \quad \mbox{whenever $x^1_{\alpha_1}\stackrel{o} \to x_1, \dots,x^m_{\alpha_m}\stackrel{o} \to x_m $.}$$ Shotaev \cite{Shotaev} has shown that for bilinear maps, separate order continuity is equivalent to joint order continuity. We have extended this result to $m$-linear mappings in \cite{BRS2}. Let us note that a net $(x_\alpha)_\alpha$ in $E$  converges in order to $x$ if there exists a net $(y_\alpha)_\alpha$, decreasing to $0$,
and an index $\alpha_o$ so
that $|
x_\alpha-x|\le y_\alpha$ for all $\alpha\ge \alpha_o$ (see \cite{Abramovich2005}). Hence for all $\alpha\ge
\alpha_o$ we have that
\begin{align*}
|x_\alpha|&=|x_\alpha-x+x|\\
&\le |x_\alpha-x|+|x|\\
&\le y_\alpha+|x|\\
&\le y_{\alpha_o}+|x|.
\end{align*}
This tells us that if a net $(x_\alpha)_\alpha$ converges in order then we can find
$\alpha_o$ so that $(x_\alpha)_{\alpha\ge\alpha_o}$ is bounded in order and therefore a
norm bounded subset of $E$.

Let  $A\colon E^m \to F$ be an $m$-linear
mapping and let $a\in E_+$.  We will  denote by $A_a$
the 
restriction of $A$ to $(E_a)^m$. The next theorem shows which properties, including order continuity, of $A$ are characterised by restricting to principal ideals.
\begin{theorem}\label{locmulti}
	Let $E$, $F$ be Banach lattices, 
	with $F$ Dedekind complete and let $A,B\colon E^m \to F$ be $m$-linear
	mappings.  Then
	\begin{itemize}
		\item[(a)] $A$ is bounded if and only if
		$A_a$  is bounded on $E^m_a$ for every 
		$a\in E_+$.
		\item[(b)]
		$A$ is positive (respectively regular)
		if and only if $A_a$ is positive (respectively regular) on $E^m_a$ for every 
		$a\in E_+$.
		\item[(c)]
		Suppose that $A$ is regular. Then $A$ is jointly order continuous
		if and only if $A_a\colon (E_a)^m\to F$
		is jointly order continuous for every
		$a\in E_+$.
		
		\item[(d)] Suppose that $A$ is regular. Then 
		$$
		|A|_a = |A_a|, \quad
		(A\vee B)_a = A_a \vee B_a, \quad
		(A\wedge B)_a = A_a \wedge B_a
		$$
		for every $a\in E_+$.  In particular,
		$A\perp B$ if and only if
		$A_a \perp B_a$ for every $a\in E_+$.
	\end{itemize}
\end{theorem}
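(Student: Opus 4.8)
The plan is to prove the four items in the natural order, leaning on the fact that restriction to a principal ideal $E_a$ is a lattice-algebraic operation that interacts well with positivity and suprema. For (a), one direction is trivial since $E_a \subseteq E$; for the converse, if $A$ is unbounded there is a sequence $(x^1_n,\dots,x^m_n)$ of unit vectors with $|A(x^1_n,\dots,x^m_n)| \to \infty$, and one wants to capture all the relevant vectors inside a single principal ideal. The natural candidate is $a = \sum_n 2^{-n}(|x^1_n|\vee\cdots\vee|x^m_n|)$, which exists by norm-completeness of $E$; then each $x^j_n \in E_a$ and, comparing the $E_a$-unit-ball norm with the original norm (they differ by at most a constant on each fixed vector), $A_a$ is unbounded on $E_a^m$, a contradiction. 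For (b), positivity of $A_a$ for all $a$ clearly implies positivity of $A$ since every positive $x\in E$ lies in $E_x$; the regular case follows because $A$ is regular iff it is a difference of positive $m$-linear maps, and one should check that a dominating positive map can be assembled consistently — here one can instead argue via the fact (from \cite{BRS2} or standard theory) that regularity is equivalent to order-boundedness of $A$ on order-bounded sets, which localises directly since order-bounded subsets of $E$ sit inside principal ideals.

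For (c), assume $A$ is regular. If $A$ is jointly order continuous then so is each $A_a$, because an order-null net in $E_a$ is order-null in $E$ (an order interval in $E_a$ is an order interval in $E$). For the converse, suppose $x^j_{\alpha_j} \stackrel{o}\to x_j$ in $E$ for $j=1,\dots,m$; by the remark in the text each net is eventually dominated, say $|x^j_{\alpha_j}| \le u_j$ eventually and $|x_j|\le u_j$, so setting $a = u_1\vee\cdots\vee u_m$ all the tails and the limits lie in $E_a$, the order convergence still holds in $E_a$ (the dominating decreasing-to-zero nets $y^j_{\alpha_j}$ also eventually lie in $E_a$, being dominated by $u_j$), and joint order continuity of $A_a$ delivers $A(x^1_{\alpha_1},\dots,x^m_{\alpha_m}) \stackrel{o}\to A(x_1,\dots,x_m)$ in $F$. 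Here one should be a little careful: "eventually in $E_a$" is enough for order convergence, and $F$ being Dedekind complete is what makes all the suprema and the order limit well defined.

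For (d), the key identity is $|A|_a = |A_a|$; the other three formulas and the disjointness statement follow formally from it (e.g. $A\vee B = \tfrac12(A+B+|A-B|)$, and $A\perp B$ iff $|A|\wedge|B| = 0$ iff $(|A|\wedge|B|)_a = 0$ for all $a$ iff $|A_a|\wedge|B_a| = 0$ for all $a$, using that an element of a Dedekind complete lattice is $0$ iff its value, or rather the corresponding functional, vanishes on every principal ideal). To get $|A|_a = |A_a|$, recall the Riesz–Kantorovich formula: for positive arguments, $|A|(x_1,\dots,x_m) = \sup\{A(y_1,\dots,y_m) : |y_j|\le x_j\}$, and when $x_1,\dots,x_m \in E_a$ every admissible $y_j$ also lies in $E_a$, so the supremum defining $|A|$ coincides with the one defining $|A_a|$. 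The main obstacle is making sure this Riesz–Kantorovich sup-formula is available in the $m$-linear regular setting and that it genuinely localises — i.e. that the pointwise supremum over $y_j\in [-x_j,x_j]\subseteq E_a$ in $F$ is the same whether computed for $A$ or $A_a$ — which is immediate once one has the formula, so the real work is citing or briefly recalling it. Inequality $|A_a|\le |A|_a$ is easy from monotonicity; the reverse uses that $|A|_a$, restricted to $E_a^m$, is a positive $m$-linear map dominating $\pm A_a$, hence dominates $|A_a|$, and then one shows no strictly smaller dominant exists by exhibiting approximating $y_j\in E_a$, which is exactly the sup-formula again. I expect step (d), specifically the clean localisation of the modulus, to be the crux; once it is in place, (a)–(c) are routine applications of the "everything fits in one principal ideal" principle.
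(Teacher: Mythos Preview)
Your overall strategy matches the paper's: localise to principal ideals, use a Riesz--Kantorovich-type description of $|A|$ to get $|A|_a = |A_a|$, and push order-convergent nets into a single $E_a$. Parts (c) and (d) are essentially what the paper does (the paper happens to use the partition formula $|A|(x_1,\dots,x_m) = \sup\bigl\{\sum |A(u^1_{i_1},\dots,u^m_{i_m})| : u^i \in \pi(x_i)\bigr\}$ from Bu--Buskes rather than your sup over $|y_j|\le x_j$, but the localisation argument is identical either way, since partitions of $x_i \in E_a$ stay in $E_a$). The paper also folds the proof of (d) into (b): it shows the partition-sup defining $|A|$ exists precisely because it coincides with $|A_a|$ for any $a \ge x_1\vee\cdots\vee x_m$, which is a little slicker than invoking a separate ``regular $\Leftrightarrow$ order-bounded'' criterion.

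There is, however, a genuine gap in your argument for (a). With $a = \sum_n 2^{-n}(|x^1_n|\vee\cdots\vee|x^m_n|)$ you correctly place every $x^j_n$ in $E_a$, but the $E_a$-norm of $x^j_n$ is only bounded by $2^n$, not by a uniform constant. So from $\|A(x^1_n,\dots,x^m_n)\|\to\infty$ you get only $\|A_a\| \ge \|A(x^1_n,\dots,x^m_n)\|/2^{mn}$, which need not blow up. Your parenthetical ``they differ by at most a constant on each fixed vector'' is precisely the problem: that constant depends on $n$. The repair is easy---either choose the sequence at the outset so that $\|A(x^1_n,\dots,x^m_n)\| \ge 2^{(m+1)n}$, or do what the paper does: rescale to $y^i_n = x^i_n/n$ (so that $\|A(y^1_n,\dots,y^m_n)\| \ge n$ once $\|A(x^1_n,\dots,x^m_n)\|\ge n^{m+1}$), then pass to subsequences that are relatively uniformly null (every norm-null sequence in a Banach lattice has such a subsequence); these land in a single $E_a$ with $E_a$-norms tending to $0$, forcing $A_a$ to be unbounded.
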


\begin{proof}
	(a) If $A$ is a bounded $m$-linear
	mapping, then clearly $A_a$ is a bounded 
	$m$-linear mapping on $E_a^m$
	for every $a\in E_+$.  
	
	To prove the converse, suppose that $A$ is unbounded.  Then there exist
	sequences $(x^1_n)$, $\dots$, $(x^m_n)$ in 
	the unit ball of $E$
	such that $\|A(x^1_n,\dots,x^m_n)\| \ge
	n^{m+1}$ for every $n$.  The sequences
	defined by $y^i_n = x^i_n/n$ converge
	to zero and so they have subsequences with the same indexing set that converge
	relatively uniformly to zero. In other words, the subsequences lie in some principal ideal $E_a$ and they converge to zero for the principal ideal norm.
	Therefore $A_a$ is unbounded for some 
	$a\in E_+$, a contradiction.
	
(b) Let us first note that an $m$-linear mapping $A$ is positive
if and only if $A_a$ is positive
on every principal ideal $E_a$.

Suppose that
every $A_a$ is regular. To show that $A$ is regular, by \cite{BuBuskes12}, we have to show
that the following supremum exists for each $m$-tuple
of positive elements of $E$:
$$
|A|(x_1,\dots,x_m) =
\sup\Bigl\{\sum |A(u^1_{i_1}, \dots , u^m_{i_m})|: u^i\in\pi(x_i), 1\le i\le m\Bigr\}.
$$
Note that if $x_1\vee\dots\vee x_m \le a$,
then this supremum defines $|A_a|$
at the $m$-tuple $(x_1,\dots,x_m)$
and its value is independent of $a$,
since replacing $a$ by a bigger element
does not add any additional partitions
of $x_1,\dots,x_m$. The converse is obvious.

(c) Observe
that if $(x_\alpha)$ is an order convergent
net in $E$, then there is an index $\alpha_0$
and a positive element $a$ in $E$ such
that the net $(x_\alpha:\alpha\ge \alpha_0)$
is order convergent in $E_a$. Suppose now that $A_a$ is jointly order continuous at $(x_1,\dots,x_m)$. Taking $\alpha_i\ge \alpha^i_o$ for $i=1,\dots,m$, we have that
$$
A(x^1_{\alpha_1}, \dots, x^m_{\alpha_m})=A_a(x^1_{\alpha_1}, \dots, x^m_{\alpha_m})\longrightarrow A_a(x_1, \dots, x_m)=A(x_1, \dots, x_m),
$$
showing that $A$ is jointly order continuous at $(x_1,\dots,x_m)$.
The converse direction is trivial.

(d) The argument in Part (b) also shows that 
$$
|A_a| = |A|_a 
$$
for every $a\in E_+$ with similar identities for $A^+$
and $A^-$. By the same reasoning, we have formulae
such as $(A\vee B)_a = A_a\vee B_a$.
And in particular, disjointness is also
determined on the principal ideals:
$A \perp B$ if and only if $A_a \perp
B_a$ for every $a\in E_+$. 
\end{proof}

To state the corresponding results for $m$-homogeneous polynomials, let us recall that a scalar valued $m$-homogeneous polynomial, $P$, is said to be order continuous at $x$ if $P(x_\alpha)$ converges to $P(x)$ whenever $(x_\alpha)_\alpha$ is a net in $E$
which converges in order to $x$. Moreover, it is easy to see that an $m$-homogeneous polynomial
is order continuous if and only if its
symmetric $m$-linear generator is jointly
order continuous.
	
\begin{proposition}\label{loc}
	Let $E$, $F$ be Banach lattices, 
	with $F$ Dedekind complete and let $P,Q\colon E \to F$ be $m$-homogeneous
	polynomials.  Then
	\begin{itemize}
		\item[(a)] $P$ is bounded if and only if
		$P_a$ (the restriction of $P$
		to $E_a$) is bounded on $E_a$ for every 
		$a\in E_+$.
		\item[(b)]
		$P$ is positive (respectively regular)
		if and only if $P_a$ is positive
		(respectively regular) on $E_a$ for every 
		$a\in E_+$.
		\item[(c)] Suppose that $P$ is regular. Then
		$P$ is order continuous if and only if
		$P_a$ is order continuous on $E_a$ for every 
		$a\in E_+$.
		
		\item[(d)] Suppose that $P$ and $Q$ are regular. Then 
		$$
		|P|_a = |P_a|, \quad
		(P\vee Q)_a = P_a \vee Q_a, \quad
		(P\wedge Q)_a = P_a \wedge Q_a
		$$
		for every $a\in E_+$.  In particular,
		$P\perp Q$ if and only if
		$P_a \perp Q_a$ for every $a\in E_+$.
	\end{itemize}
\end{proposition}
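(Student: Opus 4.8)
The plan is to derive each assertion from the corresponding part of Theorem~\ref{locmulti} by passing to the symmetric $m$-linear generator. Write $A=\check P$ and $B=\check Q$ for the symmetric $m$-linear forms generating $P$ and $Q$. The observation underlying everything is that restriction to a principal ideal is compatible with this correspondence: the symmetric $m$-linear generator of $P_a$ is precisely $A_a$, since $A_a$ is symmetric and $m$-linear on $(E_a)^m$ and satisfies $A_a(x,\dots,x)=A(x,\dots,x)=P(x)=P_a(x)$ for $x\in E_a$, so uniqueness of the generator forces the identity; more generally $\widehat{B_a}=(\widehat B)_a$ for every $m$-linear $B$. With this dictionary $P\leftrightarrow A$, $P_a\leftrightarrow A_a$ in place, the whole proposition follows by translation.

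For (a), I would combine the polarisation formula with $P(x)=A(x,\dots,x)$ to obtain constants $c_m,c_m'>0$ depending only on $m$ with $c_m'\,\|P\|\le\|A\|\le c_m\,\|P\|$, the same estimates holding verbatim for $P_a$ and $A_a$ over every $E_a$; hence $P$ is bounded iff $A$ is, and $P_a$ iff $A_a$, so the claim is Theorem~\ref{locmulti}(a) applied to $A$. For (b), recall (see \cite{BuBuskes12}) that $P$ is positive, respectively regular, exactly when $A=\check P$ is; applying this to $P$ and to $P_a$ (whose generator is $A_a$) and invoking Theorem~\ref{locmulti}(b) for $A$ gives (b). For (c), I would use the remark preceding the proposition --- an $m$-homogeneous polynomial is order continuous iff its symmetric generator is jointly order continuous --- applied to $P$ over $E$ and to $P_a$ over $E_a$, and chain these equivalences with Theorem~\ref{locmulti}(c) for the form $A$, which is regular by (b).

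For (d) the essential point is that $B\mapsto\widehat B$ is not merely a bijection of cones but a \emph{lattice} isomorphism from the regular symmetric $m$-linear forms onto the regular $m$-homogeneous polynomials (a bipositive linear bijection of vector lattices is automatically a lattice isomorphism); one also needs that the modulus of a regular symmetric $m$-linear form, taken in the space of all regular $m$-linear forms, is again symmetric --- an averaging argument over the symmetric group --- so that Theorem~\ref{locmulti}(d) applies to $A$ directly. Granting this, $|P_a|=\widehat{|A_a|}=\widehat{\,|A|_a\,}=(\widehat{|A|})_a=|P|_a$, the middle equality being Theorem~\ref{locmulti}(d); the formulae for $P\vee Q$ and $P\wedge Q$ follow likewise, or from the modulus identity via $P\vee Q=\tfrac{1}{2}(P+Q+|P-Q|)$. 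For disjointness, $P\perp Q$ means $|P|\wedge|Q|=0$; putting $R=|P|\wedge|Q|\ge 0$, the identities just proved give $R_a=|P_a|\wedge|Q_a|$, and an $m$-homogeneous polynomial vanishes precisely when all its restrictions $R_a$ do (for $x\in E$ take $a=|x|$, so $R(x)=R_a(x)=0$), whence $P\perp Q$ iff $R_a=0$ for every $a$ iff $P_a\perp Q_a$ for every $a$.

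I expect part (d) to be the main obstacle: parts (a)--(c) amount to bookkeeping once the generator dictionary is fixed, whereas (d) genuinely uses the lattice --- not merely the cone or the norm --- structure of the polynomial spaces, together with the compatibility of the modulus both with symmetrisation (to bring Theorem~\ref{locmulti}(d) into play) and with restriction to principal ideals.
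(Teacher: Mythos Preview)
Your proposal is correct and matches the paper's approach: the paper gives no separate proof of Proposition~\ref{loc}, presenting it as the polynomial analogue of Theorem~\ref{locmulti} after noting that a polynomial is order continuous iff its symmetric generator is jointly order continuous. The translation you carry out via the generator correspondence $P\leftrightarrow A$, $P_a\leftrightarrow A_a$ is exactly what is intended, and your care in part~(d) about the lattice isomorphism and the symmetry of $|A|$ supplies detail the paper leaves implicit.
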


\section{Order continuous polynomials}

In this section we will focus on order continuous polynomials and discuss when order continuity at one point implies order continuity at every point. Let us begin by showing that it is possible for a polynomial on a Banach lattice $E$ to be
order continuous at $0$ yet not to be order continuous  at every point of $E$.
To see this suppose that $E$ is a Banach
lattice which admits an order continuous functional $\varphi$ and a norm
continuous linear functional $\psi$ which is not order continuous. For example, take $E=C[0,1]\bigoplus L_1[0,1]$, since there are no order continuous linear functionals on $C[0,1]$ and every bounded linear functional on $L_1[0,1]$ is order continuous.

Define the $m$-homogeneous polynomial $P$ by $P(x)=\varphi^{m-1}(x)\psi
(x)$. Let us first observe that $P$ is order continuous at $0$. To
see this let $(y_\alpha)_\alpha$ be a net in $E$ which converges to $0$ in
order. As we have already observed, we can assume without loss of generality that
$(y_\alpha)_\alpha$ is a (norm) bounded subset of $E$. Therefore, we can find $B>0$
so that $|\psi(y_\alpha)|\le B$ for all $\alpha$. Let $\varepsilon>0$.
As $(y_\alpha)_\alpha$ converges to $0$ in order
and $\varphi$ is order continuous we can find $\alpha_0$ so that
$|\varphi(y_\alpha)|^{m-1}<\frac{\varepsilon}{B}$ for all $\alpha>\alpha_o$. Then
for all $\alpha>\alpha_o$ we have
$$
|P(y_\alpha)|=|\varphi(y_\alpha)|^{m-1}|\psi(y_\alpha)|\le\frac{\varepsilon}{B}B
=\varepsilon
$$
which proves that $P$ is order continuous at $0$.

As $\psi$ is not order continuous, for any $x_o$ there exists
a net $(x_{\alpha})_\alpha $ which converges to $x_o$ in order but for
which $\psi(x_{\alpha})\not\to\psi(x_o)$. Let us suppose that $\varphi(x_o)\not
=0$.  As $\varphi$ is order continuous, we can find $A>0$ and
$\alpha_1$ so that $|\varphi(x_{\alpha})|^{m-1}>A$, all $\alpha>\alpha_1$.

We
consider two cases. In the first we
assume that $C=|\psi(x_o)|\not=0$. As $\psi(x_\alpha)\not\to\psi(x_o)$ we can
find $t>0$, a subnet $(x_{\alpha_\beta})_\beta$  of $(x_\alpha)_\alpha$ and
$\beta_2$ so that $|\psi(x_{\alpha_\beta})-\psi(x_o)|>t$ for all
$\beta>
\beta_2$. As $\varphi(x_{\alpha_\beta})$ converges to $\varphi(x_o)$ we can find
$\beta_3$ so that $|\varphi^{m-1}(x_{\alpha_\beta})-\varphi^{m-1}(x_o)|
<\frac{tA}{2C}$ for
all $\beta>\beta_3$. Let us choose $\beta_1$ so that $\alpha_{\beta_1}>\alpha_1$
and set $\beta_0=\max\{\beta_1,\beta_2,\beta_3\}$. Then
for $\beta>\beta_o$ we have
\begin{align*}
	|P(x_{\alpha_\beta})-P(x_o)| & =|\varphi^{m-1}(x_{\alpha_\beta})\psi(x_{\alpha_\beta})-
	\varphi^{m-1}(x_o)\psi(x_o)|\\
	&=|\varphi^{m-1}(x_{\alpha_\beta})
	\psi(x_{\alpha_\beta})-\varphi^{m-1}(x_{\alpha_\beta})\psi(x_o)-(\varphi^{m-1}
	(x_{o})\psi(x_o)-
	\varphi^{m-1}(x_{\alpha_\beta})\psi(x_o))|\\
	& \ge \left||\varphi^{m-1}(x_{\alpha_\beta})\psi(x_{\alpha_\beta})
	-\varphi^{m-1}(x_{\alpha_\beta}
	)\psi(x_o)|
	-|\varphi^{m-1}(x_o)\psi(x_o)-\varphi^{m-1}(x_{\alpha_\beta})\psi(x_o)|\right|\\
	& =\left||\varphi(x_{\alpha_\beta})|^{m-1}|\psi(x_{\alpha_\beta})
	-\psi(x_o)|-|\psi(x_o)||
	\varphi^{m-1}(
	x_{\alpha_\beta})-\varphi^{m-1}(x_o)|\right|\\
	& \ge tA- C\frac{tA}{2C}\\
	& =\frac{tA}{2}
\end{align*}                                                            
proving that $P$ is not order continuous at $x_o$.

Now let us suppose that $C=|\psi(x_o)|=0$. Then we can find $t>0$ and $\beta_2$
so that $|\psi(x_{\alpha_\beta})|>t$ for all $\beta>\beta_2$. Let $\beta_o=\max
\{\beta_1,\beta_2\}$. Then for $\beta>\beta_o$ we have
$$
|P(x_{\alpha_\beta})-P(x_o)|=|\varphi(x_{\alpha_\beta})|^{m-1}|\psi(x_{\alpha_\beta}
)|> tA
$$
and therefore $P$ is not order continuous at $x_o$.

While a regular homogeneous polynomial which is order
continuous at one point of a Banach lattice need not be order continuous
at every point we can still say something about the set of points
at which a regular polynomial is order continuous.

\begin{proposition}
	Let $E$ be a Banach lattice and $P$ a regular $m$-homogeneous
	polynomial on $E$. Let $C_{o}(P)$ denote the set of all points $x$ in $E$ for which $P$ is order continuous at $x$. Then $C_{o}(P)$ is a
	norm closed subset of $E$.
\end{proposition}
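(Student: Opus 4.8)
The plan is to take a point $x$ in the norm closure of $C_o(P)$, fix a sequence $(x_n)\subset C_o(P)$ with $x_n\to x$ in norm, and, given a net $(y_\alpha)$ converging in order to $x$, show $P(y_\alpha)\to P(x)$ by correcting $y_\alpha$ with the small norm perturbation $w_n:=x_n-x$, which moves the order limit from $x$ to the nearby point $x_n$ where order continuity is already available.

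First I would record two preliminaries. Since $P$ is regular, $P$ is bounded, hence norm continuous, and its associated symmetric $m$-linear form $A$ is bounded; write $\|A\|=C$. Second, as observed just before Theorem \ref{locmulti}, if $y_\alpha\stackrel{o}{\longrightarrow}x$, say $|y_\alpha-x|\le z_\alpha\downarrow 0$ for $\alpha\ge\alpha_0$, then $|y_\alpha|\le z_{\alpha_0}+|x|$ for $\alpha\ge\alpha_0$, so there is $M<\infty$ with $\|y_\alpha\|\le M$ for all $\alpha\ge\alpha_0$ and $\|x\|\le M$. The crucial observation is that $y_\alpha+w_n\stackrel{o}{\longrightarrow}x_n$ using the \emph{same} net $(z_\alpha)$ and index $\alpha_0$, because $|(y_\alpha+w_n)-x_n|=|y_\alpha-x|\le z_\alpha$.

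Next, from the binomial expansion of the homogeneous polynomial, $P(y_\alpha+w_n)-P(y_\alpha)=\sum_{j=1}^m\binom{m}{j}A(y_\alpha^{m-j},w_n^j)$, so that for all $\alpha\ge\alpha_0$,
$$
\bigl|P(y_\alpha+w_n)-P(y_\alpha)\bigr|\le \sum_{j=1}^m\binom{m}{j}C\,M^{m-j}\|w_n\|^{j}=:\delta_n ,
$$
where $\delta_n$ does not depend on $\alpha$ and $\delta_n\to 0$ as $n\to\infty$ since $\|w_n\|\to 0$. Also $P(x_n)\to P(x)$ by norm continuity of $P$. Now fix $\varepsilon>0$, choose $n$ with $\delta_n<\varepsilon/3$ and $|P(x_n)-P(x)|<\varepsilon/3$, and then, using order continuity of $P$ at $x_n$ together with $y_\alpha+w_n\stackrel{o}{\longrightarrow}x_n$, pick $\alpha_1\ge\alpha_0$ with $|P(y_\alpha+w_n)-P(x_n)|<\varepsilon/3$ for $\alpha\ge\alpha_1$. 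The triangle inequality
$|P(y_\alpha)-P(x)|\le |P(y_\alpha)-P(y_\alpha+w_n)|+|P(y_\alpha+w_n)-P(x_n)|+|P(x_n)-P(x)|$
then gives $|P(y_\alpha)-P(x)|<\varepsilon$ for $\alpha\ge\alpha_1$, so $x\in C_o(P)$.

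The only step that really needs care is the uniform-in-$\alpha$ estimate $\delta_n$: it is precisely here that the norm-boundedness of an order-convergent net is used, so that the perturbation $P(y_\alpha+w_n)-P(y_\alpha)$ is controlled by a quantity depending only on $n$. Everything else is a routine multilinear bound and the elementary verification that adding a constant vector to a net preserves order convergence (shifting the limit accordingly).
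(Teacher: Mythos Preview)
Your proof is correct and follows essentially the same approach as the paper's. Both arguments are a standard $\varepsilon/3$ splitting: use order continuity at a nearby point $x_n\in C_o(P)$, and control the perturbation uniformly in $\alpha$ via the eventual norm-boundedness of order-convergent nets. The only cosmetic differences are that the paper phrases the test net as $x_\alpha\stackrel{o}{\to}0$ and considers $P(u+x_\alpha)$, and that it invokes ``uniform norm continuity of $P$ on bounded sets'' as a black box where you write out the explicit multilinear bound via the binomial expansion.
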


\begin{proof} Let $(u_n)_n$ be a sequence in $C_{o}(P)$ which converges
	to some point $u$ in $E$. Let $\varepsilon>0$. Let $(x_\alpha)_\alpha$ be a
	net in $E$ which is convergent to $0$ in order. Replacing $(x_\alpha)_\alpha$
	with a suitable tail, we may assume  without loss of generality,
	that $(x_\alpha)_\alpha$ is bounded. As $P$ is uniformly (norm)
	continuous on bounded sets there is $n_o$ in 
	$\mathbb{N}$ so that $|P(u)-P(u_n)|<\varepsilon/3$ and $|P(u+x_\alpha)-P(u_n+
	x_\alpha
	)|<\varepsilon/3$ for all $n\ge n_o$ and all $\alpha$. Since $u_{n_o}$ 
	belongs to $C_{o}(P)$, $P$ is order continuous at $u_{n_o}$.
	Therefore, we can find $\alpha_o$ so $|P(u_{n_o}+x_\alpha)-P(u_{n_o})|<
	\varepsilon/3$ whenever $\alpha>\alpha_o$. Then for $\alpha>\alpha_o$ we have
	\begin{align*}
		|P(u+x_\alpha)-P(u)|&\le |P(u+x_\alpha)-P(u_{n_o}+x_{\alpha})|+|P(u_{n_o}
		+x_{\alpha})-P(u_{n_o})|+|P(u_{n_o})-P(u)|\\
		&<\frac{\varepsilon}{3}+\frac{\varepsilon}{3}+
		\frac{\varepsilon}{3}\\
		&=\varepsilon.
	\end{align*}
	Hence we have that $P$ is order continuous at $u$.
\end{proof}

This allows us to show that order continuity of a homogeneous
polynomial at any point implies order continutiy at $0$.

\begin{corollary}\label{regocts0octsevery}
	Let $E$ be a Banach lattice and $P$ is a regular $m$-homogeneous polynomial
	on $E$. If $P$ is order continuous at some point of $E$ then
	$P$ is order continuous at $0$.
\end{corollary}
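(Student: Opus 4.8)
The plan is to combine the preceding proposition with the $m$-homogeneity of $P$. The first step is to observe that the set $C_o(P)$ is invariant under multiplication by a nonzero scalar. Indeed, suppose $P$ is order continuous at a point $x$ and let $t\neq 0$. If $(z_\alpha)_\alpha$ is a net converging in order to $tx$, say $|z_\alpha - tx|\le w_\alpha$ with $w_\alpha\downarrow 0$, then $|t^{-1}z_\alpha - x| \le |t|^{-1}w_\alpha\downarrow 0$, so $(t^{-1}z_\alpha)_\alpha$ converges in order to $x$. Since $P$ is order continuous at $x$, $P(t^{-1}z_\alpha)\to P(x)$, and multiplying by $t^m$ and using $m$-homogeneity gives $P(z_\alpha)\to t^m P(x) = P(tx)$. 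Hence $P$ is order continuous at $tx$, i.e. $tx\in C_o(P)$.

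With this in hand the corollary is immediate. Suppose $P$ is order continuous at some point $x_0\in E$; there is nothing to prove if $x_0=0$, so assume $x_0\neq 0$. By the scaling invariance just established, $\tfrac1n x_0\in C_o(P)$ for every $n\in\N$. Since $\tfrac1n x_0\to 0$ in norm and $C_o(P)$ is norm closed by the previous proposition, we conclude $0\in C_o(P)$, that is, $P$ is order continuous at $0$.

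There is no real obstacle here; the only point requiring a moment's care is the elementary fact that scaling an order-convergent net by a fixed nonzero scalar again produces an order-convergent net, with the limit scaled accordingly, which is exactly the computation recorded in the first paragraph. For completeness I note an alternative, self-contained route that bypasses the closedness of $C_o(P)$: given a net $(y_\alpha)_\alpha$ decreasing in order to $0$ and fixing distinct reals $s_1,\dots,s_m>0$, order continuity of $P$ at $x_0$ applied to the nets $(x_0+s_jy_\alpha)_\alpha$ forces $\sum_{k=1}^m\binom{m}{k}s_j^k A(x_0^{m-k},y_\alpha^k)\to 0$ for each $j$, where $A$ is the symmetric $m$-linear form generating $P$; since the matrix $(s_j^k)_{j,k=1}^m$ is invertible, one deduces $A(y_\alpha^m)=P(y_\alpha)\to 0=P(0)$. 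The argument via $C_o(P)$ is shorter, so that is the one I would present.
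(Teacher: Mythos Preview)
Your proof is correct and follows essentially the same approach as the paper: show by homogeneity that $C_o(P)$ is stable under nonzero scalar multiplication, then let the scalar tend to $0$ and invoke the norm-closedness of $C_o(P)$ from the preceding proposition. You simply spell out in detail the scaling step that the paper dispatches with the phrase ``by homogeneity''.
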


\begin{proof} Let us suppose that $P$ is order continuous at
	$x$. By homogeneity it follows that $P$ is order continuous
	at $\lambda x$ for every
	$\lambda$ in $\mathbb{R}$. Letting $\lambda$ tend to $0$ the above Proposition
	gives us that $P$ is order continuous at $0$.
\end{proof}

Let us now look at order continuity of orthogonally additive polynomials. We will show that in contrast to regular polynomials in general, order continuity of an orthogonally additive homogeneous polynomial
at any point implies order continuity at every point. We begin by looking at order continuity on $C(K)$-spaces. In this setting, we will make use of the following lemma.

\begin{lemma}\label{ocxm} If $(x_\alpha)_\alpha$ is a net in $C(K)$ which converges in order to $x$ then the net $(x^m_\alpha)_\alpha$ converges in order to $x^m$.
\end{lemma}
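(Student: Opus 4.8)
The plan is to reduce the statement to the definition of order convergence, exploiting the fact that $C(K)$ is a commutative ring under pointwise multiplication. Since $(x_\alpha)_\alpha$ converges in order to $x$, there exist a net $(y_\alpha)_\alpha$ decreasing to $0$ and an index $\alpha_0$ with $|x_\alpha - x| \le y_\alpha$ for every $\alpha \ge \alpha_0$. The same elementary computation used earlier (writing $x_\alpha = (x_\alpha - x) + x$) gives $|x_\alpha| \le y_{\alpha_0} + |x|$ for all $\alpha \ge \alpha_0$; setting $M = y_{\alpha_0} + |x| \in C(K)_+$ and $c = \|M\|_\infty$, we therefore have the pointwise bounds $0 \le |x_\alpha| \le c$ and $0 \le |x| \le c$ for every $\alpha \ge \alpha_0$.

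Next I would use the algebraic factorisation, valid in the commutative ring $C(K)$,
\[
x_\alpha^m - x^m = (x_\alpha - x)\sum_{k=0}^{m-1} x_\alpha^{\,m-1-k}\, x^{k},
\]
together with the triangle inequality for the modulus and the bounds just obtained, to estimate, for every $\alpha \ge \alpha_0$,
\begin{align*}
|x_\alpha^m - x^m| &\le |x_\alpha - x| \sum_{k=0}^{m-1} |x_\alpha|^{\,m-1-k}\, |x|^{k} \\
&\le m\, c^{\,m-1}\, |x_\alpha - x| \le m\, c^{\,m-1}\, y_\alpha .
\end{align*}
Since $\bigl(m\, c^{\,m-1}\, y_\alpha\bigr)_\alpha$ is a fixed positive scalar multiple of $(y_\alpha)_\alpha$, it is again a net decreasing to $0$; hence the displayed inequality is exactly what is required to conclude that $(x_\alpha^m)_\alpha$ converges in order to $x^m$.

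There is no serious difficulty in this argument; the one point deserving a moment's care is that one should not try to bound $|x_\alpha^m - x^m|$ by $M^{m-1} y_\alpha$ and argue with that directly, since it is not immediately clear that multiplying an order-null net by a fixed positive element of $C(K)$ again produces an order-null net. Passing to the constant bound $c$ --- that is, using the supremum norm estimate rather than the pointwise one --- avoids this issue, because positive scalar multiples of order-null nets are trivially order null.
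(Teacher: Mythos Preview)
Your proof is correct and follows essentially the same route as the paper's: both use the factorisation $x_\alpha^m - x^m = (x_\alpha - x)\sum_{k=0}^{m-1} x_\alpha^{\,m-1-k} x^{k}$ together with the eventual order-boundedness of the net to obtain a bound of the form $m\,c^{\,m-1}\,|x_\alpha - x|$ with $c$ a scalar. The only cosmetic difference is that the paper splits into the cases $x=0$ and $x\ne 0$, which your uniform treatment shows to be unnecessary; incidentally, your closing worry is over-cautious, since $M\le \|M\|_\infty\, 1_K$ immediately gives $M^{m-1} y_\alpha \le \|M\|_\infty^{\,m-1}\, y_\alpha \downarrow 0$ as well.
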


\begin{proof}
Suppose that $(x_\alpha)_\alpha$ is a net in $C(K)$ which converges in
order to $x$. Replacing $(x_\alpha)_\alpha$ with $(x_\alpha)_{\alpha\ge\alpha_o}$
we may assume without loss of generality that $(x_\alpha)_\alpha$
is order bounded. Let $B=\sup_\alpha\|x_\alpha\|$. 

First suppose that $x=0$. Then we have
$$
|x_\alpha^m|\leq B^{m-1}|x_\alpha|
$$
giving us that $(x_\alpha^m)_\alpha$ converges to $0$ in order. 

Now suppose that $x\neq 0$. In this case we have
\begin{align*}
|x_\alpha^m-x^m|&=\left|(x_\alpha-x)\left(\sum_{j=0}^{m-1}x_\alpha^{m-1-j}x^j
\right)\right|\\
&\le|x_\alpha-x|\sum_{j=0}^{m-1}|x_\alpha|^{m-1-j}|x|^j\\
&\le  |x_\alpha-x|\sum_{j=0}^{m-1}\|x_\alpha\|^{m-1-j}|\|x\|^j
1_K  \\
&\le mB^{m-1}|x_\alpha-x|,
\end{align*}
from this we conclude that $(x_\alpha^m)_\alpha$ converges to $x^m$ in order.
\end{proof}

In order to state our next result, we need to recall the definition of a normal measure. A regular Borel measure on a compact Hausdorff
space $K$ is said to be normal
if $\mu(B)=0$ for every closed, nowhere dense subset
$B$ of $K$. It is easy to see that $\mu$ is normal
if and only if the positive measure
$|\mu|$ is normal. Our next theorem is in the spirit of the following result of Dixmier \cite{Dix} which relates order continuous linear functionals and normal measures.  

\begin{theorem}[\cite{Dix}]
	Let $K$ be a compact Hausdorff space.  
	A bounded linear functional on $C(K)$ is 
	order continuous if and only if the (unique)
	regular Borel measure that represents it
	is a normal measure.
\end{theorem}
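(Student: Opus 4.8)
The plan is to reduce to the case of a positive functional and then prove the two implications by constructions with Urysohn functions; the one genuinely delicate point is that order convergence is a net notion, so that the usual dominated convergence argument has to be replaced by a Dini-type argument on a compact set of small measure. By the Riesz representation theorem $\MK$ is a Banach lattice isometrically order isomorphic to $C(K)'$, so $|\varphi|$ is represented by $|\mu|$; since the order continuous functionals form a band in $C(K)'$, $\varphi$ is order continuous if and only if $|\varphi|$ is, and, as already observed, $\mu$ is normal if and only if $|\mu|$ is. So I would assume from the outset that $\mu\ge 0$ is a finite positive regular Borel measure and use that, for a positive functional, order continuity is equivalent to the implication $f_\alpha\downarrow 0$ in $C(K)\ \Rightarrow\ \varphi(f_\alpha)\to 0$.

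For the implication ``order continuous $\Rightarrow$ normal'' I would argue contrapositively. Suppose $B\subseteq K$ is closed and nowhere dense with $\mu(B)>0$, and let $\mathcal F=\{f\in C(K): \ind{B}\le f\le \ind{K}\}$, directed downwards by pointwise minimum. Urysohn's lemma shows that the pointwise infimum of $\mathcal F$ is $\ind{B}$, and, since $K\setminus B$ is dense, any continuous $g\le \ind{B}$ satisfies $g\le 0$; hence the order infimum of $\mathcal F$ in $C(K)$ is $0$, i.e. $\mathcal F\downarrow 0$. But $\varphi(f)=\int_K f\,d\mu\ge \mu(B)>0$ for every $f\in\mathcal F$, so $\varphi$ is not order continuous.

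For ``normal $\Rightarrow$ order continuous'', let $f_\alpha\downarrow 0$ in $C(K)$ and set $g(t)=\inf_\alpha f_\alpha(t)$; then $g\ge 0$ is upper semicontinuous. The key step is that $\{t:g(t)\ge\varepsilon\}$ is closed and nowhere dense for every $\varepsilon>0$: it is closed by upper semicontinuity, and if it contained a nonempty open set $V$, then a Urysohn function $h$ with $0\le h\le\varepsilon$, $h\equiv 0$ off $V$ and $h\not\equiv 0$ would satisfy $0\le h\le g\le f_\alpha$ for every $\alpha$, contradicting $\inf_\alpha f_\alpha=0$ in $C(K)$. Normality of $\mu$ then gives $\mu(\{g\ge\varepsilon\})=0$ for all $\varepsilon>0$, hence $\mu(\{g>0\})=0$. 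Now, given $\varepsilon>0$, outer regularity provides an open $W\supseteq\{g>0\}$ with $\mu(W)<\varepsilon$; on the compact set $K\setminus W$ the decreasing net $(f_\alpha)$ converges pointwise to $0$, so a net version of Dini's theorem yields $\alpha_0$ with $f_\alpha<\varepsilon$ on $K\setminus W$ for all $\alpha\ge\alpha_0$. Fixing any $\alpha_1$ and setting $M=\|f_{\alpha_1}\|_\infty$, for $\alpha\ge\alpha_0,\alpha_1$ we obtain $\varphi(f_\alpha)=\int_{K\setminus W}f_\alpha\,d\mu+\int_W f_\alpha\,d\mu\le\varepsilon\,\mu(K)+M\,\mu(W)\le\varepsilon(\mu(K)+M)$, so $\varphi(f_\alpha)\to 0$.

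The main obstacle is precisely this last step: because order convergence is controlled by nets rather than sequences, the dominated convergence theorem is not directly available, and splitting $K$ into a part of small $\mu$-measure and a compact complement on which Dini's theorem applies is what salvages the argument. A lesser point requiring care is the passage between order continuity of $\varphi$ and of $|\varphi|$, which I would justify via the band property of the order continuous functionals together with the lattice structure of $\MK$.
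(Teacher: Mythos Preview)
The paper does not prove this statement; it is quoted as a known result of Dixmier and cited to \cite{Dix}, so there is no in-paper proof to compare your attempt against.

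Your argument is correct. The reduction to the positive case via the band of order continuous functionals and the identity $|\varphi|\leftrightarrow|\mu|$ is sound. In the direction ``order continuous $\Rightarrow$ normal'' your Urysohn net over a closed nowhere dense $B$, together with the observation that any continuous minorant of $\ind{B}$ vanishes because $K\setminus B$ is dense, is exactly the construction the paper itself uses when proving the polynomial analogue (Theorem~\ref{normal}). For the converse, the paper simply appeals to normality in the polynomial setting without reproving the linear case, whereas you supply the full argument: the level sets $\{g\ge\varepsilon\}$ are closed nowhere dense, hence $\mu$-null, so $\{g>0\}$ is $\mu$-null; outer regularity then gives a small open $W$, and Dini's theorem on the compact complement $K\setminus W$ replaces the unavailable dominated convergence theorem for nets. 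That splitting is indeed the standard device here, and you have identified the genuine obstacle correctly.
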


We now have the following theorem for polynomials.

\begin{theorem}\label{normal} Let $P$ be an orthogonally additive
	$m$-homogeneous
	polynomial
	on $C(K)$. Then $P$ is order continuous at $0$ if and only if its representing measure,
	$\mu$, is normal. 
\end{theorem}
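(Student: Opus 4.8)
The plan is to work throughout with the integral representation $P(x)=\int_K x^m\,d\mu$ furnished by Theorem~\ref{integral}, to reduce the polynomial statement to a statement about \emph{linear} functionals, and then to quote Dixmier's theorem above. I would prove the two implications separately.

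The ``if'' direction is immediate. If $\mu$ is normal, then by Dixmier's theorem the linear functional $\varphi(y)=\int_K y\,d\mu$ is order continuous on $C(K)$. Given a net $(x_\alpha)_\alpha$ converging in order to $0$, Lemma~\ref{ocxm} shows that $(x_\alpha^m)_\alpha$ converges in order to $0$, and hence $P(x_\alpha)=\varphi(x_\alpha^m)\to\varphi(0)=0$, so $P$ is order continuous at $0$.

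For the ``only if'' direction I would argue by contraposition: assuming $\mu$ is not normal, produce a net converging in order to $0$ along which $P$ does not tend to $0$. Since $\mu$ is normal exactly when $|\mu|$ is, and since a sum of normal measures is normal, at least one of the Jordan components $\mu^+,\mu^-$ fails to be normal; replacing $P$ by $-P$ (whose representing measure is $-\mu$) if necessary, we may assume $\mu^+$ is not normal. By Dixmier's theorem the positive functional $y\mapsto\int_K y\,d\mu^+$ is then not order continuous, so there is a net converging in order to $0$ along which it does not vanish; replacing each term by its modulus and passing to a suitable tail and subnet we obtain a net $(w_\alpha)_\alpha$ with $0\le w_\alpha\le M\,1_K$ for some constant $M$, converging in order to $0$, and with $\int_K w_\alpha\,d\mu^+\ge 2\eta>0$ for all $\alpha$. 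Put $y_\alpha=w_\alpha^{1/m}$ (pointwise $m$-th root); one checks, just as in Lemma~\ref{ocxm} but using monotonicity of $t\mapsto t^{1/m}$, that $(y_\alpha)_\alpha$ still converges in order to $0$, while $\int_K y_\alpha^m\,d\mu^+=\int_K w_\alpha\,d\mu^+\ge 2\eta$. To suppress the contribution of $\mu^-$ I would exploit the mutual singularity $\mu^+\perp\mu^-$: by inner regularity of $\mu^+$, outer regularity of $\mu^-$ and Urysohn's lemma, for every $\varepsilon>0$ there is $g\in C(K)$ with $0\le g\le 1$, $\int_K(1-g)\,d\mu^+<\varepsilon$ and $\int_K g\,d\mu^-<\varepsilon$. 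Then $x_\alpha:=y_\alpha g$ satisfies $0\le x_\alpha\le y_\alpha$, so $x_\alpha$ converges in order to $0$, whereas, using the elementary inequality $0\le 1-g^m\le m(1-g)$ on $[0,1]$ together with $0\le y_\alpha^m\le M\,1_K$,
\[
P(x_\alpha)=\int_K y_\alpha^m g^m\,d\mu^+-\int_K y_\alpha^m g^m\,d\mu^-\ \ge\ 2\eta-(m+1)M\varepsilon .
\]
Choosing $\varepsilon$ small enough (say $\varepsilon=\eta/((m+1)M)$) gives $P(x_\alpha)\ge\eta$ for every $\alpha$, so $P$ is not order continuous at $0$.

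The main obstacle is this second direction, and within it the tension between order-nullity and the fact that $\mu$ may be concentrated on a nowhere dense set: order convergence of a net to $0$ in $C(K)$ forces the net to become topologically small on a dense set, whereas the mass of $\mu$ may sit on a set with empty interior, so a net witnessing non-normality of $\int\cdot\,d\mu^+$ cannot be fed into $P$ directly. The device that reconciles this is the Urysohn cutoff $g$, essentially $1$ where $\mu^+$ is carried and essentially $0$ where $\mu^-$ is carried; multiplying by it deletes the $\mu^-$-part while harming neither order-nullity nor the lower bound coming from $\mu^+$. The two routine facts the argument rests on are the construction of $g$ with the stated integral estimates (mutual singularity, regularity, Urysohn) and the preservation of order-nullity in $C(K)$ under $m$-th roots. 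An alternative to invoking Dixmier for $\mu^+$ would be to build the net by hand, starting from a closed nowhere dense set on which $\mu^+$ has positive mass and $\mu^-$ vanishes and assembling Urysohn functions indexed by the finite subsets of its dense complement, taking minima over sub-subsets to force the family to decrease to $0$; but the route through Dixmier is cleaner to state.
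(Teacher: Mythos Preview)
Your proof is correct. The ``if'' direction is essentially the paper's argument. For the ``only if'' direction you take a genuinely different route: the paper simply declares ``we can assume without loss of generality that $P$, and hence $\mu$, is positive'', then for an arbitrary closed nowhere dense $B$ builds a decreasing Urysohn net $x_\alpha\searrow 0$ with $x_\alpha\ge\chi_B$ and reads off $\mu(B)\le\int x_\alpha\,d\mu=P(x_\alpha^{1/m})\to 0$. You instead handle the signed measure head-on, invoking Dixmier for $\mu^+$ and then using mutual singularity plus the Urysohn cutoff $g$ to suppress the $\mu^-$ contribution. Your route is longer but more self-contained about the signed case; the paper's reduction to positive $P$ is not justified in the text and would itself require an argument (for instance: $P$ order continuous at $0$ forces the linear functional $\varphi(y)=\int y\,d\mu$ to be order continuous, by feeding $m$-th roots of the positive and negative parts of an order-null net into $P$; then $|\varphi|$ is order continuous since order continuous functionals form a band, hence $|P|$ is order continuous at $0$). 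That last observation in fact yields a route shorter than either proof: show directly that $P$ is order continuous at $0$ if and only if $\varphi$ is order continuous (one direction via Lemma~\ref{ocxm}, the other via $m$-th roots), and then quote Dixmier once; this avoids both the paper's unexplained WLOG and your cutoff construction.
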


\begin{proof} We can assume without loss of generality
	that $P$, and hence $\mu$, is positive.
	Let $B$ be a closed nowhere dense subset of $K$.
	Using Urysohn's Lemma we find a bounded decreasing net $(x_\alpha)_\alpha$ of 
	nonnegative continuous functions on $K$,
	indexed by the collection of open subsets
	of $K$ containing $B$, that converges pointwise to $\chi_B$.
	
	Then the infimum of the set $\{x_\alpha\}$
	is the zero function.
	To see this, suppose there exists a nonnegative
	function $y$ such that $0<y\le x_\alpha$ for
	every $\alpha$.  Applying this at each point, 
	we see that $0<y\le \chi_B$.
	But now the open set $\{t\in K: y(t)<1\}$
	is contained in $B$, a contradiction, 
	since $B$ is nowhere dense.
	
	Therefore, the zero function is the greatest
	lower bound of $\{x_\alpha\}$ as claimed and so
	$x_\alpha\searrow 0$. It follows that $x_\alpha^{1/m}\searrow 0$ and since
	$P$ is order continuous at $0$ we have
	$$
	\mu(B)= \int_K \chi_B\,d\mu \le
	\int_K x_\alpha\,d\mu =\int_K (x^{1/m}_\alpha)^m\,d\mu= P(x_\alpha^{1/m}) \to 0\,,
	$$
	and so $\mu(B)=0$. Therefore $\mu$ is a normal 
	measure.

To prove the converse, suppose that $(x_\alpha)_\alpha$ is a net in $C(K)$ which converges in
order to $0$. By Lemma \ref{ocxm} and since $\mu$ is a normal measure,  we have that
$$
P(x_\alpha)=\int_K x_\alpha^m\, d\mu\longrightarrow 0. 
$$
Hence $P$ is order continuous at $0$.

\end{proof}

The next proposition is a consequences of the above theorem.

\begin{proposition}\label{octeverypoint} Let $P$ be an orthogonally additive
	$m$-homogeneous polynomial on $C(K)$. Then $P$ is order continuous at $0$ if and only if $P$ is order continuous at
	every point of $C(K)$.
\end{proposition}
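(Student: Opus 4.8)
The plan is to obtain the result by combining Theorem~\ref{normal}, Lemma~\ref{ocxm}, and Dixmier's theorem on order continuous functionals on $C(K)$. Only the forward implication requires an argument, since order continuity at every point trivially implies order continuity at $0$.

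First I would invoke Theorem~\ref{normal}: if $P$ is order continuous at $0$, then its representing measure $\mu$ is normal. Because a signed measure is normal exactly when its variation $|\mu|$ is normal, Dixmier's theorem (\cite{Dix}) applies and shows that the bounded linear functional $L_\mu\colon y\mapsto \int_K y\,d\mu$ on $C(K)$ is order continuous. This is the step that does the real work; everything else is bookkeeping.

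Next, fix an arbitrary $x\in C(K)$ and let $(x_\alpha)_\alpha$ be a net in $C(K)$ converging in order to $x$. By Lemma~\ref{ocxm}, the net $(x_\alpha^m)_\alpha$ converges in order to $x^m$ in $C(K)$. Feeding this order convergent net into the order continuity of $L_\mu$ gives
$$
P(x_\alpha)=\int_K x_\alpha^m\,d\mu = L_\mu(x_\alpha^m)\longrightarrow L_\mu(x^m)=\int_K x^m\,d\mu = P(x),
$$
so $P$ is order continuous at $x$, and since $x$ was arbitrary, $P$ is order continuous at every point of $C(K)$.

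I do not expect a genuine obstacle here once Theorem~\ref{normal} and Lemma~\ref{ocxm} are available; the one point deserving care is that Lemma~\ref{ocxm} provides order convergence of $(x_\alpha^m)_\alpha$ \emph{as a net in the Banach lattice $C(K)$}, which is precisely what is needed to apply order continuity of $L_\mu$. One could alternatively bypass Dixmier's theorem and argue directly from the normality of $\mu$, mirroring the converse part of the proof of Theorem~\ref{normal}, but routing through Dixmier keeps the argument shortest.
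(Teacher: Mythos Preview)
Your proof is correct and follows essentially the same route as the paper: invoke Theorem~\ref{normal} to get that $\mu$ is normal, use Lemma~\ref{ocxm} to pass from $x_\alpha\stackrel{o}\to x$ to $x_\alpha^m\stackrel{o}\to x^m$, and then conclude via order continuity of the linear functional $y\mapsto\int_K y\,d\mu$. The paper's proof leaves the appeal to Dixmier's theorem implicit (writing simply ``since $\mu$ is a normal measure''), whereas you spell it out, but the argument is the same.
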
 

\begin{proof}
Suppose that $P$ is order continuous at $0$ then Theorem \ref{normal} gives us that its representing measure, $\mu$, is normal. Let $(x_\alpha)_\alpha$ be a net in $C(K)$ which converges in order to $x$. Using Lemma \ref{ocxm} and the fact that $\mu$ is a normal measure, we have that
$$
P(x_\alpha)=\int_K x_\alpha^m\, d\mu\longrightarrow \int_K x^m\,d\mu=P(x),
$$
proving that $P$ is order continuous at every point of $C(K)$.

The converse is trivial. 
\end{proof}

Let us extend this result to arbitrary Banach lattices.

\begin{proposition}
	Let $E$ be a Banach lattice and $P$ be an orthogonally additive
	$m$-homogeneous polynomial on
	$E$ which is order continuous at some point of $E$. Then $P$ is order continuous at every
	point of $E$.
\end{proposition}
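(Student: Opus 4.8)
The plan is to reduce the statement to order continuity at the origin and then localise to a principal ideal on which $P$ becomes an orthogonally additive polynomial on a $C(K)$-space, where Proposition~\ref{octeverypoint} already does the work.

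First I would invoke Corollary~\ref{regocts0octsevery}: since an orthogonally additive $m$-homogeneous polynomial is in particular regular, and $P$ is order continuous at some point, $P$ is order continuous at $0$. Thus it suffices to show that an orthogonally additive $P$ which is order continuous at $0$ is order continuous at an arbitrary point $x_0\in E$.

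So fix $x_0\in E$ and let $(x_\alpha)_\alpha$ be a net in $E$ converging in order to $x_0$, say $|x_\alpha-x_0|\le y_\alpha$ for $\alpha\ge\alpha_0$ with $y_\alpha\searrow 0$. Exactly as in the observation used in the proof of Theorem~\ref{locmulti}(c), set $a=y_{\alpha_0}+|x_0|\in E_+$; then $x_0\in E_a$, the tail $(x_\alpha)_{\alpha\ge\alpha_0}$ lies in $E_a$, and $y_\alpha\searrow 0$ inside $E_a$ as well (if $0\le w\le y_\alpha$ in $E$ for all $\alpha\ge\alpha_0$ then $w\in E_a$ because $E_a$ is an ideal, so $w\le\inf_{E_a}y_\alpha=0$); hence the tail converges in order to $x_0$ in $E_a$. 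Identify $E_a$ with $C(K_a)$ via \cite{Kakut}. Now $P_a$, the restriction of $P$ to $E_a$, is a regular $m$-homogeneous polynomial on $E_a$ by Proposition~\ref{loc}(b), and it is orthogonally additive since disjointness in $E_a$ coincides with disjointness in $E$. Moreover $P_a$ is order continuous at $0$: any net in $E_a$ converging in order to $0$ in $E_a$ also converges in order to $0$ in $E$ (again, because $E_a$ is an ideal, a decreasing dominating net still has infimum $0$ in $E$), and $P$ is order continuous at $0$. Applying Proposition~\ref{octeverypoint} to the orthogonally additive polynomial $P_a$ on $C(K_a)$, we conclude that $P_a$ is order continuous at every point of $E_a$, in particular at $x_0$. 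Therefore $P(x_\alpha)=P_a(x_\alpha)\to P_a(x_0)=P(x_0)$, and $P$ is order continuous at $x_0$.

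This is essentially an assembly of results already in hand, so I do not expect a genuine obstacle; the one point needing care is the transfer of order continuity \emph{at} $0$ from $E$ down to the principal ideal $E_a$. This rests on the elementary lattice fact that order convergence inside an ideal implies order convergence in the ambient space, and on choosing $a$ large enough that both $x_0$ and a tail of the given net live in $E_a$ while the net still converges in order there.
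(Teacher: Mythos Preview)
Your proof is correct and follows essentially the same route as the paper: reduce to order continuity at $0$ via Corollary~\ref{regocts0octsevery}, pass to a principal ideal $E_a\cong C(K_a)$, observe that $P_a$ is orthogonally additive and order continuous at $0$ there, and invoke Proposition~\ref{octeverypoint}. The only cosmetic difference is that the paper quotes Proposition~\ref{loc}(c) for the final lift back to $E$, whereas you unfold that step by hand for a fixed $x_0$ and net; both are fine.
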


\begin{proof}  By Corollary \ref{regocts0octsevery}, $P$ is order continuous at $0$. Let $a\in E_+$.  
	We first observe that $P_a$ is orthogonally additive
	on $E_a$.
	Let $(y_\alpha)$ be net in $E_a$ which converges to
	$0$ in order in $E_a$. Since $E_a$ is an ideal in $E$, it follows that $(y_\alpha)_\alpha$ converges to $0$ in order in
	$E$. So we have that
	$P_a(y_\alpha)=P(y_\alpha)$ converges to $0$, proving that $P_a$ is order
	continuous at $0$. From Proposition \ref{octeverypoint}, it follows that $P_a$ is order
	continuous at each point of $E_a$. Finally, by Proposition \ref{loc} (c), we have that $P$ is order continuous at each point of $E$.
	
\end{proof}

\section{A carrier theorem for polynomials} 

Let $E$, $F$ be Banach lattices, with $F$ Dedekind complete. Following Kusraev \cite{Kusr}, we define the null ideal $N(A)$ of a regular symmetric $m$-linear mapping $A\colon E^m \to F$ by 

$$
N(A)=\left\{ x\in E \, :\, |A|(|x|,\dots,|x|)=0\right\}.
$$

Now if $P$ is the regular $m$-linear polynomial associated with $A$  then we define the null ideal $N(P)$ of $P$ by
$$
N(P)=\left\{ x\in E \, :\,|P|(|x|)=0\right\}=\left\{ x\in E \, :\, |A|(|x|,\dots,|x|)=0\right\}=N(A).
$$
Then the carrier $C(P)$ of $P$ and the carrier $C(A)$ of $A$ are defined by
$$
C(P)=N(P)^\perp
=N(A)^\perp=C(A).
$$

Let us recall that  if $P$ is a regular $m$-homogeneous
polynomial on $C(K)$, then it corresponds to 
a regular Borel measure $\mu$ on $K^m$. More precisely, Fremlin \cite{FremlinA} proved the following result, 
which readily extends to regular $m$-homogeneous
polynomials on $C(K)$: If $P$ is a regular $2$-homogeneous polynomial on $C(K)$ then there exists a regular Borel measure $\mu$ on $K^2$ such that 
$$
P(x)=\int_{K^2}x(s)x(t)\,d\mu(s,t)
$$
for all $x\in C(K)$.

For the linear functional on $C(K^m)$ defined by $\mu$, the the null ideal of $\mu$ is given by

$$
N(\mu) =
\bigl\{x\in C(K^m): \int_{K^m}|x|\,d|\mu| =0\bigr\} \,.
$$

We note that since $N(P)$ is a subset of  
$C(K)$ and $N(\mu)$ is a subset of 
$C(K^m)$, we cannot relate the null ideal of $P$ directly to the null ideal of the corresponding measure $\mu$.

However, if $P$ is orthogonally additive then the null ideal of $P$ coincides with null ideal of the corresponding measure $\mu$ as given by Theorem \ref{integral}. We note that in this case the null ideal of $P$ is given by
$$
N(P)=\left\{x\in C(K):\int |x|^m\,d|\mu|=0\right\}
$$
and the null ideal of $\mu$ is given by
$$
N(\mu) =\bigl\{x\in C(K): \int_{K}|x|\,d|\mu| =0\bigr\} \,.
$$
So clearly $N(P)=N(\mu)$.

Next let us recall that the support of a regular Borel measure $\mu$ on a compact set $K$, denoted by $\supp \mu$, is the smallest closed set $S$ such that $|\mu| (K\setminus S)=0$.

\begin{proposition}
Let $P\colon C(K) \to \R$ be an orthogonally additive $m$-homogeneous polynomial and $\mu$ the corresponding  regular Borel measure on $K$. Then we have
$$
N(P)=\left\{x\in C(K): x=0\quad |\mu|{\rm -a.e}.\right\}.
$$ Therefore,
$$
C(P)=\bigl\{x\in C(K):\supp{x}\subset \supp{\mu}\bigr\}.
$$
\end{proposition}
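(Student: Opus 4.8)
The plan is to establish the two displayed set equalities essentially by unwinding the definitions, using the integral formula for $N(P)$ recorded just before the statement together with standard facts about supports of regular Borel measures.

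\emph{First equality.} By the discussion preceding the proposition, for an orthogonally additive $m$-homogeneous polynomial $P$ on $C(K)$ with representing measure $\mu$ we have
$$
N(P)=\Bigl\{x\in C(K):\int_K |x|^m\,d|\mu|=0\Bigr\}.
$$
So I would argue that for $x\in C(K)$ one has $\int_K|x|^m\,d|\mu|=0$ if and only if $x=0$ $|\mu|$-a.e. The ``if'' direction is immediate. For ``only if'', $\int_K|x|^m\,d|\mu|=0$ with $|x|^m\ge 0$ forces $|x|^m=0$ $|\mu|$-a.e., hence $|x|=0$ $|\mu|$-a.e., i.e.\ $x=0$ $|\mu|$-a.e. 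This is the elementary fact that a nonnegative integrable function with zero integral vanishes a.e.; no continuity of $x$ is needed here. That gives the first displayed equality.

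\emph{Second equality.} Recall $C(P)=N(P)^\perp$, where the disjoint complement is taken in the Riesz space $C(K)$: $x\perp y$ means $|x|\wedge|y|=0$, equivalently $x$ and $y$ have disjoint open supports (cozero sets). I would show
$$
\bigl\{x\in C(K): x=0\ |\mu|\text{-a.e.}\bigr\}^\perp=\bigl\{x\in C(K):\supp x\subset\supp\mu\bigr\}.
$$
For the inclusion $\supseteq$: if $\supp x\subset\supp\mu$ and $y=0$ $|\mu|$-a.e., then $\{t: y(t)\ne 0\}$ is an open set that is $|\mu|$-null, hence by definition of $\supp\mu$ it is disjoint from $\supp\mu$, hence disjoint from $\supp x$; since $\{t: x(t)\ne 0\}\subset\supp x$ and $\{t: y(t)\ne 0\}$ are then disjoint open sets, $|x|\wedge|y|=0$, so $x\perp y$. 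For the inclusion $\subseteq$: suppose $x\not\subset\supp\mu$ in the sense that $\supp x$ meets the open set $K\setminus\supp\mu$; then the open set $U=\{t:x(t)\ne 0\}$ meets $K\setminus\supp\mu$, so there is a nonempty open subset $V\subset U\cap(K\setminus\supp\mu)$; by Urysohn's Lemma pick $y\in C(K)$, $y\ge 0$, $y\ne 0$, with $\supp y\subset V$. Then $y$ vanishes outside $V\subset K\setminus\supp\mu$, so $y=0$ $|\mu|$-a.e., i.e.\ $y\in N(P)$, while $|x|\wedge|y|\ne 0$ on $V$, so $x\notin N(P)^\perp$. Taking contrapositives gives $\subseteq$.

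\emph{Main obstacle.} The only genuine point requiring care is the ``$\subseteq$'' direction of the second equality: one must produce, from a point where $x\ne 0$ and $\mu$ ``does not see $x$'', an explicit nonzero element of $N(P)$ not disjoint from $x$. This is exactly where Urysohn's Lemma and the definition of $\supp\mu$ as the smallest closed set whose complement is $|\mu|$-null are used; one should be a little careful that ``$\supp x\not\subset\supp\mu$'' is being read as ``the open cozero set of $x$ meets $K\setminus\supp\mu$,'' which is equivalent since the cozero set is dense in $\supp x$. Everything else is routine. I would remark that the first equality is a cleaner restatement of $N(P)$ and the second follows formally from it once the Riesz-space disjoint complement in $C(K)$ is identified with disjointness of cozero sets.
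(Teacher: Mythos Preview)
Your proof is correct and follows the same approach as the paper: you unwind the definition of $N(P)$ via the integral formula and then compute the disjoint complement in $C(K)$. The paper's own proof is far terser---it simply records the chain of equalities for $N(P)$ and then asserts the description of $C(P)$ without further comment---so your version, with the Urysohn argument for the nontrivial inclusion, supplies precisely the details the paper leaves to the reader.
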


\begin{proof}
	It suffices to note that
	\begin{align*}
		N(P)&=\left\{x\in C(K):|P|(|x|)=0\right\}\\
		&=\left\{x\in C(K):\int |x|^m(t)\,d|\mu|(t)=0\right\}\\
		&=\left\{x\in C(K): x=0\quad |\mu|{\rm-a.e.}\right\}.
	\end{align*}
	We now have that
	$$
	C(P)=N(P)^\perp=\bigl\{x\in C(K):\supp{x}\subset \supp{\mu}\bigr\}. 
	$$
\end{proof}
	As we have noted earlier, $N(P)=N(\mu)$. We now observe that
	$$
	C(P)=N(P)^\perp=N(\mu)^\perp=C(\mu).
	$$

We can now state a Nakano carrier theorem for orthogonally additive polynomials on $C(K)$.

\begin{theorem}\label{nakanock}	Let $P$ and $Q$ be orthogonally additive $m$-homogeneous polynomials on $C(K)$. If $P$ is order continuous then $P\perp Q$ if and only if $C(P)\perp C(Q)$.
\end{theorem}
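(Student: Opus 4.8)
The plan is to reduce the statement to a fact about normal measures and their supports, using the identification $\opoly{m}{C(K)}\cong\MK$ from Theorem~\ref{p:isomorphism} together with the characterisation $C(P)=C(\mu)$ established above. Let $\mu$ and $\nu$ be the regular Borel measures on $K$ representing $P$ and $Q$ respectively, so that $P\perp Q$ holds in $\opoly{m}{C(K)}$ if and only if $\mu\perp\nu$ holds in $\MK$ (since $I_m$ is a Banach lattice isometric isomorphism). By Theorem~\ref{normal}, the hypothesis that $P$ is order continuous (at $0$, equivalently at every point by Proposition~\ref{octeverypoint}) translates exactly into $\mu$ being a normal measure. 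The carriers satisfy $C(P)=\{x\in C(K):\supp x\subset\supp\mu\}$ and similarly for $Q$, so $C(P)\perp C(Q)$ is equivalent to $\supp\mu$ and $\supp\nu$ having disjoint interiors, or more precisely to $\supp\mu\cap\supp\nu$ having empty interior (two ideals of the form $\{x:\supp x\subset S_1\}$, $\{x:\supp x\subset S_2\}$ with $S_i$ closed are disjoint precisely when $\operatorname{int}(S_1\cap S_2)=\emptyset$, since a nonzero continuous function supported in a closed set forces that set to have nonempty interior).

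Thus the theorem reduces to the following measure-theoretic assertion: if $\mu$ is a normal (regular Borel) measure and $\nu$ an arbitrary regular Borel measure on $K$, then $\mu\perp\nu$ if and only if $\operatorname{int}(\supp\mu\cap\supp\nu)=\emptyset$. The forward direction is routine: if $\mu\perp\nu$ there is a Borel set $B$ with $|\mu|(K\setminus B)=0$ and $|\nu|(B)=0$; by regularity one arranges $\supp\mu\subset\overline{B}$ up to a $|\mu|$-null set and $\supp\nu\subset K\setminus(\text{an open set carrying }\mu)$, and a point in the interior of $\supp\mu\cap\supp\nu$ would have to be charged by both, a contradiction. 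The substantive direction is the converse, and this is where normality is essential. Suppose $\operatorname{int}(\supp\mu\cap\supp\nu)=\emptyset$; I want to produce a decomposition of $K$ separating $\mu$ and $\nu$. The idea is that $S:=\supp\mu\cap\supp\nu$ is a closed set with empty interior, hence nowhere dense, hence $|\mu|(S)=0$ because $\mu$ is normal. Then $K\setminus S$ is an open set on which $\supp\mu$ and $\supp\nu$ are "separated" in the sense that $(\supp\mu\setminus S)\cap(\supp\nu\setminus S)=\emptyset$; writing $U=K\setminus\supp\nu$ (open) one checks $|\nu|(U)=0$ while $|\mu|(K\setminus U)=|\mu|(\supp\nu)=|\mu|(\supp\nu\cap\supp\mu)=|\mu|(S)=0$, giving $\mu\perp\nu$ with witness set $U$.

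The main obstacle is the bookkeeping in that last chain of equalities — specifically verifying $|\mu|(\supp\nu)=|\mu|(\supp\mu\cap\supp\nu)$, which requires knowing $|\mu|(K\setminus\supp\mu)=0$ (true by definition of support) so that $|\mu|$ lives on $\supp\mu$, and then invoking normality to kill the nowhere-dense piece $S$. One must also be careful that $\supp\nu$ need not be nowhere dense in general, so normality of $\mu$ cannot be applied to it directly; it is only applied to the intersection $S$, whose nowhere-density is exactly the hypothesis $\operatorname{int}(\supp\mu\cap\supp\nu)=\emptyset$. Finally, one should record explicitly the translation $C(P)\perp C(Q)\iff\operatorname{int}(\supp\mu\cap\supp\nu)=\emptyset$ used at the start: if $x\in C(P)$ and $y\in C(Q)$ are disjoint for all such $x,y$, then no nonzero $x$ can be supported in $\supp\mu\cap\supp\nu$, forcing that set to have empty interior; conversely if the interior is empty, any $x\in C(P)$ vanishes off $\supp\mu$ and any $y\in C(Q)$ vanishes off $\supp\nu$, and $|x|\wedge|y|$ is a continuous function supported in the interior-free set $\supp\mu\cap\supp\nu$, hence zero.
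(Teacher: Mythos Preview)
Your overall reduction---identify $P,Q$ with their representing measures $\mu,\nu$ via Theorem~\ref{p:isomorphism}, use Theorem~\ref{normal} to translate order continuity of $P$ into normality of $\mu$, and use $C(P)=C(\mu)$---is exactly what the paper does. The paper then finishes in one line by invoking the classical Nakano Carrier Theorem for the \emph{linear} functionals $\mu,\nu\in C(K)'$, which gives $\mu\perp\nu \iff C(\mu)\perp C(\nu)$ directly. You instead unwind $C(P)\perp C(Q)$ as $\operatorname{int}(\supp\mu\cap\supp\nu)=\emptyset$ and try to prove the equivalence with $\mu\perp\nu$ from scratch. That detour is where the gap appears.

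Your argument for the implication $\operatorname{int}(\supp\mu\cap\supp\nu)=\emptyset \Rightarrow \mu\perp\nu$ is correct and uses normality in the right place. The problem is the other implication, which you label ``routine'' and for which your sketch does not invoke normality at all. That direction is \emph{not} routine and genuinely requires normality: on $K=[0,1]$ take $\mu$ to be Lebesgue measure and $\nu$ any singular continuous measure with full support (e.g.\ the Stieltjes measure of a strictly increasing singular function). Then $\mu\perp\nu$, yet $\supp\mu=\supp\nu=[0,1]$ and the intersection has nonempty interior. Your heuristic ``a point in the interior of $\supp\mu\cap\supp\nu$ would have to be charged by both, a contradiction'' fails precisely because two mutually singular measures can each give positive mass to every nonempty open set. (The same phenomenon, in carrier language, is why the linear Nakano theorem needs an order-continuity hypothesis for \emph{both} directions: with $\varphi=\psi=\delta_0$ on $C[0,1]$ one has $C_\varphi=C_\psi=\{0\}$, so $C_\varphi\perp C_\psi$, yet $\varphi\not\perp\psi$.)

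So the direction you dismissed is in fact the substantive half of the linear Nakano theorem in disguise. You can repair it by supplying a genuine argument that uses normality of $\mu$, but the cleanest fix---and the paper's route---is simply to observe that once you have $C(P)=C(\mu)$ and $C(Q)=C(\nu)$, the whole biconditional is exactly the linear Nakano theorem applied to $\mu$ and $\nu$, with $\mu$ order continuous by Dixmier's theorem.
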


\begin{proof}
	Let $\mu_P$ and $\mu_Q$ be the representing measures for $P$ and $Q$ respectively as given by Theorem \ref{integral}. Recall that Theorem \ref{normal} and Proposition \ref{octeverypoint} give us that if $P$ is order continuous if and only if $\mu_p$ is normal.  
	Now by Theorem \ref{p:isomorphism}, we have $P\perp Q$ if and only if $\mu_P\perp \mu_Q$. Nakano's theorem for linear functionals gives us that 
	$\mu_P\perp \mu_Q$ if and only if $C(\mu_P) \perp C(\mu_Q) $. By definition $C(\mu_P)=C(P)$ and $C(\mu_Q)=C(Q)$ and the result follows.
\end{proof}

To extend the above result to arbitrary Banach lattices, we need  the following lemma.

\begin{lemma}\label{loccarr}
	Let $E$, $F$ be Banach lattices, 
	with $F$ Dedekind complete and let $P, Q\colon E \to F$ be regular $m$-homogeneous polynomials. Then 
	$$
	N(P_a) = N(P) \cap E_a
	\quad\text{and}\quad 
	C(P_a) = C(P)\cap E_a\,,
	$$
	for every $a\in E_+$. In particular, $C(P)\perp C(Q)$ if and only if $C(P_a)\perp C(Q_a)$ for every $a\in E_+$.
\end{lemma}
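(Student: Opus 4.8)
The plan is to establish the two set equalities $N(P_a) = N(P) \cap E_a$ and $C(P_a) = C(P) \cap E_a$ directly from the definitions, using Proposition \ref{loc}(d) to control the modulus under localisation, and then derive the disjointness statement as a formal consequence.

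First I would handle the null ideal. For $x \in E_a$, membership of $x$ in $N(P_a)$ means $|P_a|(|x|) = 0$, while membership in $N(P) \cap E_a$ means $|P|(|x|) = 0$ together with $x \in E_a$. By Proposition \ref{loc}(d) we have $|P|_a = |P_a|$, so for $x \in E_a$ one gets $|P_a|(|x|) = |P|_a(|x|) = |P|(|x|)$; hence the two conditions coincide and $N(P_a) = N(P) \cap E_a$. This step is essentially bookkeeping once the localisation of the modulus is invoked.

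Next I would treat the carrier. Here $C(P_a) = N(P_a)^\perp$ is the disjoint complement computed inside $E_a$, whereas $C(P) \cap E_a$ is the intersection with $E_a$ of the disjoint complement computed inside $E$. The inclusion $C(P) \cap E_a \subseteq C(P_a)$ is immediate: if $x \in E_a$ is disjoint (in $E$) from every element of $N(P)$, then since $N(P_a) = N(P) \cap E_a \subseteq N(P)$ it is disjoint from every element of $N(P_a)$, and disjointness in $E$ restricts to disjointness in the ideal $E_a$. For the reverse inclusion $C(P_a) \subseteq C(P) \cap E_a$, suppose $x \in E_a$ with $x \perp y$ for all $y \in N(P_a)$; I need to show $x \perp z$ for every $z \in N(P)$, not merely for $z$ in the ideal. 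The key observation is that for arbitrary $z \in N(P)$ and $x \in E_a$, the element $|z| \wedge |x|$ lies in $E_a$ (since $E_a$ is an ideal and $|z| \wedge |x| \le |x|$), and it also lies in $N(P)$ (since $N(P)$ is an ideal and $|z| \wedge |x| \le |z|$); hence $|z| \wedge |x| \in N(P) \cap E_a = N(P_a)$. Then $x \perp (|z| \wedge |x|)$ gives $|x| \wedge |z| = |x| \wedge |z| \wedge |x| = 0$, i.e. $x \perp z$. This shows $x \in C(P) \cap E_a$, completing the equality.

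Finally, the ``in particular'' clause follows formally: combining $C(P_a) = C(P) \cap E_a$ and $C(Q_a) = C(Q) \cap E_a$ with Proposition \ref{loc}(d) applied to the regular polynomials whose band projections detect disjointness — or more directly, observing that $C(P) \perp C(Q)$ means every element of $C(P)$ is disjoint from every element of $C(Q)$, which by the ideal property is equivalent to the same statement after intersecting both carriers with each $E_a$ and letting $a$ range over $E_+$ (an element and its modulus generate a principal ideal, so no pair of witnesses is lost). The main obstacle is the reverse carrier inclusion: one must resist the temptation to think $C(P_a)$ is just $C(P) \cap E_a$ by definition, and instead use the ideal structure of both $N(P)$ and $E_a$ via the truncation trick $|z| \wedge |x|$ to pass from disjointness against the localised null ideal to disjointness against the full null ideal.
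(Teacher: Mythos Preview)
Your proposal is correct and follows essentially the same route as the paper: the null ideal equality via $|P|_a = |P_a|$ from Proposition~\ref{loc}(d), the easy carrier inclusion $C(P)\cap E_a \subseteq C(P_a)$, and the reverse inclusion via the truncation $|x|\wedge|z| \in N(P)\cap E_a = N(P_a)$ are all exactly what the paper does (the paper phrases the reverse inclusion as a proof by contradiction rather than a direct argument, but the key step is identical). The final disjointness equivalence is likewise handled in the paper by choosing a single $a$ dominating $|x|\vee|y|$, matching your ``no pair of witnesses is lost'' remark.
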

\begin{proof}
	Suppose that $x\in N(P_a)$. Then $x\in E_a$. Now if we consider $x$ as an element of $E$ and use Proposition \ref{loc} (d), we have
	$$
	|P|(|x|)=|P|_a(|x|)=|P_a|(|x|)=0
	$$
	Hence $x\in N(P)$ and so $N(P_a) \subset N(P)\cap E_a$.

	Next let $x\in N(P)\cap E_a$. Then
	$$
	|P_a|(|x|)=|P|_a(|x|)=|P|(|x|)=0
	$$ 
	Hence $x\in N(P_a)$ and so $ N(P)\cap E_a \subset N(P_a)$.

	For the second identity, suppose
	that $x\in C(P)\cap E_a$.  Let $y\in N(P_a)$. Then $y\in N(P)$ and so $x\perp y$. That is, $x\in N(P_a)^\perp=C(P_a)$.
	
	To establish the reverse inclusion, suppose that $x\in C(P_a)$
	but $x\not\in C(P)$.  Then there exists $y\in N(P)$ 
	such that $|x|\wedge |y| >0$.  Since $|x|\wedge |y|\leq |x|$, we have $|x|\wedge |y| \in E_a$. Hence
	$$
	|P_a|(|x|\wedge |y|)=|P|_a(|x|\wedge |y|)=|P|(|x|\wedge |y|)\leq |P|(|y|)=0
	$$
	and so $|x|\wedge |y|\in N(P_a)$. We conclude that $x\perp (|x|\wedge |y|)$. However, $|x|\wedge  (|x|\wedge |y|)=|x|\wedge |y|\neq 0$, a contradiction.
	
	Next we prove that $C(P)\perp C(Q)$ if and only if $C(P_a)\perp C(Q_a)$ for every $a\in E_+$. So suppose that $C(P)\perp C(Q)$. Since by the above $C(P_a)\subseteq C(P)$ and $C(Q_a)\subseteq C(Q)$, we see that $C(P_a) \perp C(Q_a)$.
	
	Now assume that $C(P_a) \perp C(Q_a)$ for every $a$ in $E_+$. Then  $C(P)\cap E_a\perp  C(Q)\cap E_a$ for every $a\in E_+$. Let $x\in C(P)$ and $y\in C(Q)$. Now take any $a>|x|\wedge |y|$.  Then $x, y \in E_a$.  So $x\in C(P)\cap E_a=C(P_a)$ and $y\in C(Q)\cap E_a=C(Q_a)$. Hence, by our assumption $x\perp y$ and so $C(P)\perp C(Q)$.

\end{proof}
 
Combining Lemma \ref{loccarr} and Theorem \ref{nakanock}, we have the following Nakano carrier theorem for orthogonally additive polynomials.
	
\begin{theorem}	
	Let $P$ and $Q$ be orthogonally additive $m$-homogeneous polynomials on a Banach lattice $E$. If $P$ is order continuous then $P\perp Q$ if and only if $C(P)\perp C(Q)$.
	
\end{theorem}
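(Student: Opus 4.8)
The plan is to reduce everything to the $C(K)$ case already settled in Theorem~\ref{nakanock} by localising to principal ideals. Fix $a\in E_+$ and recall that $E_a$ can be identified with $C(K_a)$ for a suitable compact Hausdorff space $K_a$. The first thing I would check is that the restricted polynomials $P_a,Q_a\colon E_a\to\R$ inherit the relevant hypotheses. Orthogonal additivity of $P_a$ and $Q_a$ is immediate: disjointness in $E_a$ is just the restriction of disjointness in $E$ and $E_a$ is an ideal, so the orthogonal additivity of $P$ and $Q$ on the positive cone of $E$ (equivalently, the valuation identity) passes verbatim to $E_a$. Moreover, since $P$ is order continuous, Proposition~\ref{loc}(c) gives that $P_a$ is order continuous on $E_a$. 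One should note that ``order continuous'' in Theorem~\ref{nakanock} can be read as order continuity at $0$, which by Proposition~\ref{octeverypoint} is equivalent to order continuity everywhere, so there is no ambiguity.

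Next I would invoke Theorem~\ref{nakanock} on $E_a\cong C(K_a)$ for each $a\in E_+$: since $P_a$ is an order continuous orthogonally additive $m$-homogeneous polynomial and $Q_a$ is an orthogonally additive $m$-homogeneous polynomial on $C(K_a)$, we obtain
$$
P_a\perp Q_a \iff C(P_a)\perp C(Q_a).
$$

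Finally I would assemble the global statement from the local ones. By Proposition~\ref{loc}(d), $P\perp Q$ if and only if $P_a\perp Q_a$ for every $a\in E_+$. By Lemma~\ref{loccarr}, $C(P)\perp C(Q)$ if and only if $C(P_a)\perp C(Q_a)$ for every $a\in E_+$. Chaining these with the local equivalence above yields
$$
P\perp Q \iff \bigl(\forall a\in E_+\bigr)\ P_a\perp Q_a \iff \bigl(\forall a\in E_+\bigr)\ C(P_a)\perp C(Q_a) \iff C(P)\perp C(Q),
$$
which is the assertion.

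There is essentially no deep obstacle here; the only point requiring a little care is the bookkeeping of the localised hypotheses — confirming that $P_a$ remains \emph{orthogonally additive} (and not merely regular) on $E_a$, and that order continuity localises in the direction supplied by Proposition~\ref{loc}(c). Everything else is a formal composition of the equivalences provided by Proposition~\ref{loc}(d), Lemma~\ref{loccarr}, and Theorem~\ref{nakanock}.
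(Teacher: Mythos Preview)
Your proposal is correct and follows essentially the same route as the paper: localise to principal ideals $E_a\cong C(K_a)$, use Proposition~\ref{loc}(c),(d) and Lemma~\ref{loccarr} to pass between global and local statements, and apply Theorem~\ref{nakanock} on each $E_a$. The only difference is that you spell out explicitly why $P_a$ remains orthogonally additive, a point the paper leaves implicit.
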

	
\begin{proof} 
By Proposition \ref{loc} (d), we have $P\perp Q$ if and only if $P_a\perp Q_a$ for every $a\in E_+$. By part (c) of Proposition \ref{loc} we also have that $P$	is order continuous if and only if $P_a$ is order continuous for every  $a\in E_+$. Now Theorem \ref{nakanock} gives us that $P_a\perp Q_a$ if and only if $C(P_a) \perp C(Q_a) $ for every  $a\in E_+$. Finally Lemma \ref{loccarr} tells us that $C(P_a) \perp C(Q_a) $ for every  $a\in E_+$ if and only if $C(P)\perp C(Q)$. 
\end{proof}

\bibliographystyle{plain}
\bibliography{Carrier}

\noindent Christopher Boyd, School of Mathematics \& Statistics, University
College Dublin, \hfil\break
Belfield, Dublin 4, Ireland.\\
e-mail: christopher.boyd@ucd.ie

\medskip

\noindent Raymond A. Ryan, School of Mathematics, Statistics and Applied 
Mathematics, National University of Ireland Galway, Ireland.\\
e-mail: ray.ryan@nuigalway.ie

\medskip

\noindent Nina Snigireva, School of Mathematics \& Statistics, University
College Dublin, \hfil\break
Belfield, Dublin 4, Ireland.\\
e-mail: nina@maths.ucd.ie

\end{document}